%
%
\documentclass[a4paper,  reqno,10 pt]{amsart}
\usepackage{latexsym,amssymb,amscd,amsbsy,amsfonts,bbm}
\usepackage{stmaryrd} 
\usepackage{xspace}
\usepackage{enumerate}
\usepackage[applemac]{inputenc}
\usepackage[T1]{fontenc}
\usepackage{lmodern}
\usepackage{textcomp}
%
%
%
%
%
%
\theoremstyle{plain}
\newtheorem{thm}{Theorem}[section]

\newtheorem{prp}[thm]{Proposition}

\newtheorem{lem}[thm]{Lemma}
\newtheorem{crl}[thm]{Corollary}

\newtheorem{addendum}[thm]{Addendum}

\theoremstyle{definition}

\theoremstyle{remark}
\newtheorem{remark}[thm]{Remark}
\newtheorem{remarks}[thm]{Remarks}
\newtheorem{examples}[thm]{Examples}

%

%

%
 
\input{Definitions.tex}
\title[Sigma 1 of PL-homeomorphism groups]{Sigma 1 of PL-homeomorphism groups}
\begin{document}

%
\author{Ralph Strebel}
\address{D\'{e}partement de Math\'{e}matiques,
Chemin du Mus\'{e}e 23,
Universit\'{e} de Fribourg,
1700 Fribourg, Switzerland}
\email{ralph.strebel@unifr.ch}
\subjclass[2010]{20F99.\\
Keywords and phrases: Groups of PL-homeomorphisms of the real line, 
Bieri-Neumann-Strebel invariant, actions on $\R$-trees}

\begin{abstract}
In this article,
I survey the known results about the invariant $\Sigma^1$ 
of groups of PL-homeomorphisms of a compact interval
and supplement them with new results about $\Sigma^1$ of PL-homeomorphism groups
of a half line or a line.
The proofs are based on the Cayley-graph definition of the invariant.
\end{abstract}

\maketitle

\section{Introduction}
\label{sec:Introduction}
%
\subsection{Back ground}
\label{ssec:Background}
%
The first results on the invariant $\Sigma^1$ of groups of PL-homeo\-mor\-phisms of the real line
have been published  in the late 1980s,
in \cite{BNS} and in \cite{Bro87b}.
To state them I need some notation.
Let $I = [a, c]$ be a compact interval of positive length
and let $\PL_o(I)$ denote the group of all orientation preserving, 
piecewise linear homeomorphisms of the real line with supports
\footnote{The \emph{support} of a PL-homeomorphism $g \in \PL_o(\R)$ 
is the set $\{t \in \R \mid g(t) \not= t\}$;
it is a finite union of open intervals.} 
 in the interval $I$.
Each homeomorphism $g \in\PL_{o}([0, 1])$ is 
differentiable at all but finitely many points,
and so the (right) derivative $D_a(g)$ in the \emph{left} end point $a$ of $I$ exists,
as does the (left) derivative $D_c(g)$ in the \emph{right} end point $c$.
These derivatives give rise to homomorphisms $\sigma_\ell \colon g \mapsto D_a(g)$ 
and $\sigma_r\colon g \mapsto D_c(g)$ of $\PL_o(I])$ into the 
multiplicative group $\R^\times_{>0}$ of the positive reals.
%
%

The authors of \cite{BNS} compute $\Sigma^1(G)$ for finitely generated subgroups $G$ of $\PL_o(I)$
satisfying two restrictions.
They assume firstly 
that the group $G$ is \emph{irreducible} in the sense 
that no interior point of $I$ is fixed by $G$.
This assumption, in conjunction with the fact that $G$ is finitely generated implies 
that the restrictions of the homomorphisms $\sigma_\ell$ and $\sigma_r$ are non-zero;
to simplify notation, 
these restrictions will again be denoted by $\sigma_\ell$ and $\sigma_r$.

The hypothesis that $G$ be irreducible has further consequences.
It rules out that $G$ is a direct product of two subgroups $G_1$ and $G_2$ 
with supports contained in two disjoint intervals $I_1$ and $I_2$
or, more generally, 
that $G$ is a subdirect product of finitely many quotient groups with supports in disjoint intervals.
The hypothesis of irreducibility is therefore a natural requirement for a first study of $\Sigma^1$.

The second condition,
called \emph{independence of $\sigma_\ell$ and $\sigma_r$} in \cite{BNS},
amounts to the requirement that $G$ be generated by the kernels of $\sigma_\ell$ and of $\sigma_r$
or, alternatively, 
that $G$ admit a finite generating set $\XX_\ell \cup \XX_r$ 
where the elements in $\XX_\ell$ are the identity near the left endpoint of $I$ 
and where those of $\XX_r$ are the identity near the right endpoint of $I$.
This second condition is enjoyed by some well-known groups of PL-homeomorphisms,
for instance by Thompson's group $F$ and by its generalizations studied in \cite{Ste92}.
In the proof of Theorem 8.1 in \cite{BNS} it is a crucial ingredient
that allows one to derive commutator relations of a certain type.
So far no replacement of this condition has been found
that would permit one to obtain results as general as is Theorem 8.1.
\footnote{There is one exception that I would like to mention at this point.
Suppose $G$ is a finitely generated subgroup of a finitely generated group $\tilde{G}$,
both groups with supports in the same interval $I$, 
and assume that $\tilde{G}$ satisfies the stated two requirements.
If $G$ contains the derived group of $\tilde{G}$
then the complement of $\Sigma^1(G)$ is represented 
by $\ln \circ \,\sigma_\ell$  and by $\ln \circ \,\sigma_r$; see \;\cite[p.\;476]{BNS}.
If one starts with $G$ it seems, however, unlikely 
that one can find such a group $\tilde{G}$,
unless $G$ is a subgroup of a group of the form $G(I;A,P)$ containing $B(I;A,P)$,
the notation being as in \cite{BiSt14}.}

The authors of \cite{BNS} actually deal with an invariant called $\Sigma_{G'}(G)$; 
it is defined in terms of a generation property of the derived group $G'$ of $G$.
In \cite{Bro87b},
Ken Brown proposes an alternate description of $\Sigma_{G'}(G)$  
that uses actions on $\R$-trees.
His formulation allows him to extend the definition to infinitely generated groups.
He then proves a general result for infinitely generated groups,
assuming that the groups are irreducible 
and that the homomorphisms $\sigma_\ell$ and $\sigma_r$  are independent (and non-trivial).
%
\subsection{Results of this paper}
\label{ssec:Results}
The first main result is
\begin{thm}
\label{thm:Generalization-of-BNS-Theorem-8.1}
Given a subgroup of $\PL_o(I)$,
let $E(G)$ be the subset of $S(G)$ represented by those 
among the homomorphisms $\chi_\ell$ and $\chi_r$ that are non-zero.
If $G$ is irreducible 
and if the quotient group $G/(\ker \sigma_\ell\cdot \ker \sigma_r)$ is a torsion group 
then
\begin{equation}
\label{eq:Sigma1c-for-PL-groups}
\Sigma^1(G) = S(G) \smallsetminus E(G).
\end{equation}
\end{thm}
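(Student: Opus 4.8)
The plan is to argue entirely inside the Cayley graph. Fix a finite generating set $X$ of $G$; then a class $[\chi]\in S(G)$ lies in $\Sigma^1(G)$ if and only if the full subgraph $\Gamma_\chi$ of $\Gamma(G,X)$ spanned by the vertices $\{g\in G\mid \chi(g)\ge 0\}$ is connected. Recall that $\chi_\ell=\ln\circ\,\sigma_\ell$ and $\chi_r=\ln\circ\,\sigma_r$, and record the one geometric fact used throughout: a PL\nobreakdash-homeomorphism of slope $1$ at an endpoint is already the identity on a whole neighbourhood of that endpoint, so that $\ker\chi_\ell=\ker\sigma_\ell$ is exactly the set of $g\in G$ that are the identity near $a$, while $\ker\chi_r=\ker\sigma_r$ is the set of those that are the identity near $c$. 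I would then prove the two inclusions $\Sigma^1(G)\subseteq S(G)\smallsetminus E(G)$ and $S(G)\smallsetminus E(G)\subseteq\Sigma^1(G)$ separately.

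For the first inclusion it suffices to show $[\chi_\ell]\notin\Sigma^1(G)$ and $[\chi_r]\notin\Sigma^1(G)$ whenever these characters are non-zero; I treat $\chi_\ell$, the case of $\chi_r$ being symmetric under $t\mapsto a+c-t$. Writing $G_\alpha$ for the subgroup of elements supported in $[\alpha,c]$, one has $\ker\chi_\ell=\bigcup_{\alpha>a}G_\alpha$, and this union is non-trivial and strictly ascending because $G$ is irreducible and finitely generated. Choosing $t\in G$ of slope $>1$ at $a$ (possible since $\chi_\ell\ne0$), conjugation by $t$ carries $G_\alpha$ into $G_{t(\alpha)}\subseteq G_\alpha$, so that $\ker\chi_\ell$ carries exactly the ascending-$\mathrm{HNN}$ pattern in the direction of increasing $\chi_\ell$. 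This is the standard endpoint obstruction already present in \cite{BNS}: it uses only $\chi_\ell\ne0$ and not the independence hypothesis, and it places $[\chi_\ell]$ outside $\Sigma^1(G)$. Hence $E(G)\subseteq S(G)\smallsetminus\Sigma^1(G)$.

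The second, and genuinely harder, inclusion is where the torsion hypothesis must do the work of BNS's independence condition $N=G$, with $N=\ker\sigma_\ell\cdot\ker\sigma_r$. Fix $\chi$ with $[\chi]\notin E(G)$; I must show $\Gamma_\chi$ is connected. The hypothesis first enters as bookkeeping: since $\R$ is torsion-free while $G/N$ is torsion, from $g^n\in N$ one gets $\chi(g)=\tfrac1n\chi(g^n)$, so $\chi$ cannot vanish on $N$ and is therefore non-zero on $\ker\chi_\ell$ or on $\ker\chi_r$. In the main case, where $\chi$ is non-zero on both, I would produce interior ``bumps'' $v\in\ker\chi_\ell$ and $u\in\ker\chi_r$ with $\chi(u),\chi(v)>0$, and connect an arbitrary vertex $g$ of $\Gamma_\chi$ to $\id$ by a localize-and-cancel induction: after multiplying $g$ by suitable positive powers of $u$ and $v$ to raise its $\chi$-value and confine the discrepancy from $\id$ to a compact interior interval, one cancels that discrepancy using interior bumps of both orientations, never leaving $\{\chi\ge0\}$. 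Irreducibility guarantees, via the absence of interior fixed points, that such bumps are available throughout $(a,c)$, and the commutator of two elements whose supports overlap in the interior is supported in a strictly smaller interval, which is what drives the induction. The torsion condition is precisely what upgrades independence to this generality, since it forces $\mathrm{im}\,\chi_\ell/\chi_\ell(\ker\chi_r)$ and $\mathrm{im}\,\chi_r/\chi_r(\ker\chi_\ell)$ to be torsion, so that the two kernels still feel $\chi$ rationally fully.

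I expect the main obstacle to be the borderline sub-case of this second inclusion, in which $\chi$ vanishes on one kernel, say $\chi|_{\ker\chi_\ell}=0$, while nonetheless $[\chi]\ne[\chi_\ell]$ --- which can happen once $\mathrm{im}\,\chi_\ell$ has rank $\ge 2$, for then $\chi$ factors through the slope-at-$a$ homomorphism without being proportional to it. Here one family of bumps degenerates, $\chi$ being constant on the $\ker\chi_\ell$-bumps, and the symmetric cancellation of the main case breaks down. The delicate point is to reconnect $\Gamma_\chi$ using only the $\ker\chi_r$-bumps together with the level-preserving moves inside $\ker\chi$, relying on the torsion consequence that $\chi_\ell(\ker\chi_r)$ is rationally all of $\mathrm{im}\,\chi_\ell$ to supply enough one-sided push. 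This case, rather than the generic one, is where I expect the hypotheses of irreducibility and torsion to be used to their full strength.
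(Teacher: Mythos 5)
Your proposal only engages with the finitely generated case, and even there it leaves the decisive step open. First, you fix a finite generating set of $G$ at the outset and invoke finite generation again when you assert that $\ker\chi_\ell=\bigcup_{\alpha>a}G_\alpha$ is a strictly ascending union; but the theorem is stated for an \emph{arbitrary} subgroup $G$ of $\PL_o(I)$, and the infinitely generated case is not a formality: by Lemma \ref{lem:Irreducibility-PL}(ii), an irreducible subgroup for which $\sigma_\ell$ or $\sigma_r$ vanishes --- exactly the situation in which $E(G)$ has fewer than two points --- is never finitely generated. The paper needs two additional tools here: for non-membership, the submonoid criterion of Proposition \ref{prp:Non-membership-for-arbitrary-groups}, applied to the explicit monoid $M_\delta=\{g\in G_{\chi_\ell}\mid g^{-1}\ \text{is linear on}\ [a,a+\delta]\}$, and for membership, Proposition \ref{prp:Membership-for-infinitey-generated-groups}, which writes $G$ as a union of finitely generated subgroups all containing the two crucial elements $f$ and $h$. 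Your substitute for the first of these, the ``ascending-HNN pattern'', is not a proof: the observation that conjugation by some $t$ with $\chi_\ell(t)>0$ carries each $G_\alpha$ into itself only rules out the particular witnesses $B=G_\alpha$ in the HNN characterisation of $\Sigma^1$, a characterisation that in any case is available only when $\im\chi_\ell$ is infinite cyclic, which is not assumed.

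Second, and more seriously, for the inclusion $S(G)\smallsetminus E(G)\subseteq\Sigma^1(G)$ you organise the argument around a two-sided cancellation using bumps at both endpoints, and then concede that this breaks down precisely when $\chi$ vanishes on one of the kernels, offering for that sub-case only the hope that the torsion hypothesis ``supplies enough one-sided push''. That sub-case is not a borderline pathology; it is the case the proof must handle, and the paper's argument is one-sided from the start, which is why it never meets your difficulty. Given $[\chi]\notin E(G)$, the torsion hypothesis forces $\chi$ to be non-zero on $\ker\sigma_\ell$ or on $\ker\sigma_r$, say on $\ker\sigma_r$; from $[\chi]\neq[\chi_\ell]$ one manufactures a single $f$ with $\chi(f)>0>\chi_\ell(f)$, and from $\chi(\ker\sigma_r)\neq\{0\}$ together with irreducibility a single $h$ with $\chi(h)>0$ and $\supp h\subset[a,a+\delta]$ (Lemma \ref{lem:Existence-elements-f-and-h}). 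Conjugating $h$ by powers of $f$ drives its support into an arbitrarily small neighbourhood of $a$, where it commutes with every commutator of generators, and this yields relations $t^{-1}yt=w_y$ that verify the $\Sigma^1$-criterion (Lemma \ref{lem:Sufficient-condition-for-point-in-Sigma1}); the case $\chi(\ker\sigma_\ell)\neq\{0\}$ is the same argument reflected at the other endpoint. As written, your localize-and-cancel induction is not carried out, and the configuration you isolate as requiring the full strength of the hypotheses is exactly the one for which you give no argument.
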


This theorem encompasses Ken Brown's result
and generalizes Theorem 8.1 in \cite{BNS}.
In contrast to these results, 
Theorem \ref{thm:Generalization-of-BNS-Theorem-8.1} treats also the case 
where one of $\sigma_\ell$, $\sigma_r$, or both of them, vanish.
This case was previously considered in the unpublished monograph \cite{BiSt92},
see Theorem IV.3.3.

The proof of Theorem \ref{thm:Generalization-of-BNS-Theorem-8.1}
is based on the Cayley graph definition of $\Sigma^1$
and given in Section  \ref{sec:Proof-first-main-theorem}.
If $G$ is finitely generated the $\Sigma^1$-criterion all by itself 
allows one to establish formula  \eqref{eq:Sigma1c-for-PL-groups};
otherwise,
one expresses $G$ as a union of finitely generated subgroups,
uses actions on suitable measured trees 
\footnote{in the sense of \cite[Section C2.4a]{Str13a}} or on $\R$-trees,
and brings into play the previously established result 
for the finitely generated, approximating subgroups.
In Section \ref{sec:Discussion-hypotheses}
are assembled a few remarks about the invariant of groups $G$
that are, either not irreducible, 
or for which $G/(\ker \sigma_\ell \cdot \ker \sigma_r)$ contains elements of infinite order.
\subsubsection{Variations} 
\label{sssec:Variations}
%
Theorem  \ref{thm:Generalization-of-BNS-Theorem-8.1} assumes 
that the supports of the elements of $G$ are contained in a compact interval $I$.
This hypothesis is imposed in many studies of PL-homeo\-mor\-phism groups of the real line,
but it is by no means a requisite for interesting results;
see, \eg{} \cite{BrSq85}, \cite{BrGu98}, \cite{BrSq01} and \cite{BiSt14}.
The question thus arises whether there exist general results, 
in the vein of Theorem \ref{thm:Generalization-of-BNS-Theorem-8.1}
where the interval $I$ is a half line or a line.

Let me first point out a source of difficulty.
Suppose $I$ is the half line $[0, \infty[$ and let $G$ is a subgroup of $\PL_0(I)$.
Every element $g \in G$ has only finitely many breakpoints 
and so $g$ coincides,  in a neighbourhood of $\infty$,
with a unique affine homeomorphism $\rho(g)$. 
The assignment $g \mapsto \rho(g)$ is a homomorphism into the affine group 
$\Aff_o(\R) \iso \R_{\add} \rtimes \R^\times_{> 0}$. 
Its image $\bar{G} = \im \rho$ is metabelian or abelian.
Now, it is a well-known fact that every epimorphism $\pi \colon G \epi \bar{G}$ 
induces an embedding of $\pi^* \colon S(\bar{G}) \incl S(G)$ 
that maps $\Sigma^1(\bar{G})^c$ into $\Sigma^1(G)^c$ 
(cf.\;\cite[Corollary B1.8]{Str13a}).
The complement of the invariant $\Sigma^1(\bar{G})$ of a metabelian group need not be finite;
it can even be all of $S(\bar{G})$.
\footnote{If $\bar{G}$ is finitely generated, 
the invariant $\Sigma^1(\bar{G})$ can be expressed in terms of the invariant $\Sigma_A(Q)$
(with $Q = \bar{G}_{\ab}$ and $A = [\bar{G}, \bar{G}]$; see \cite[III.Theorem 4.2]{BiSt92}) 
and this invariant has been studied in \cite{BiGr84} and \cite[Section 6]{Str84} in great detail.}

The stated difficulty disappears if the image of $\rho$ is abelian.
Moreover,
if this image consists only of translations 
a result similar to Theorem \ref{thm:Generalization-of-BNS-Theorem-8.1} holds,
namely

\begin{thm}
\label{thm:Generalization-of-BNS-Theorem-8.1-half-line}
Let $G$ be a subgroup of $\PL_o([0,\infty[\,)$ with the property 
that every element of $\im (\rho \colon G \to \Aff_o(\R))$ is a translation.
Define $\tau_r \colon G \to \R$ to be the homomorphism 
that sends $g \in G$ to the amplitude of the translation  $\rho(g)$.

Let $E(G)$ be the subset of $S(G)$ represented by those 
among the homomorphisms $\chi_\ell$ and $-\tau_r$ that are not zero.
If $G$ is irreducible 
and if $G/(\ker \sigma_\ell\cdot \ker _r)$ is a torsion group 
then
\begin{equation}
\label{eq:Sigma1c-for-PL-groups-half-line}
\Sigma^1(G) = S(G) \smallsetminus E(G).
\end{equation}
\end{thm}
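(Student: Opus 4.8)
The plan is to transcribe the proof of Theorem \ref{thm:Generalization-of-BNS-Theorem-8.1} given in Section \ref{sec:Proof-first-main-theorem}, reading the two ends of the half line $[0,\infty[$ in place of the two endpoints of the compact interval, and then to isolate the single genuinely new ingredient, the analysis at the end $\infty$. First I would establish the finitely generated case by the Cayley-graph $\Sigma^1$-criterion, and only afterwards pass to the general case. For the latter I would write $G$ as the directed union of its finitely generated subgroups $G_n$, arrange that each $G_n$ is irreducible and still satisfies that $G_n/(\ker\sigma_\ell\cdot\ker\tau_r)$ is a torsion group, and feed the resulting finitely generated statements into the tree-action limiting machinery exactly as in Section \ref{sec:Proof-first-main-theorem}. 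Since that limiting procedure is insensitive to whether the right-hand side is a finite endpoint or the end $\infty$, it carries over verbatim, and the whole problem reduces to the case where $G$ is finitely generated.

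For the inclusion $E(G)\subseteq\Sigma^1(G)^{c}$ I would, for each of the two ends, produce an isometric action of $G$ on an $\R$-tree with a fixed end whose Busemann character is the corresponding element of $E(G)$; by the tree form of the $\Sigma^1$-criterion this places that character in the complement. Near $0$ the germs of the elements of $G$ become translations after the logarithmic coordinate change $t\mapsto\ln t$, the amplitude attached to $g$ being $\chi_\ell(g)=\ln\sigma_\ell(g)$; the resulting line is an $\R$-tree, $G$ fixes its end $-\infty$, and the Busemann character of that end is $\chi_\ell$. Near $\infty$ the germs are by hypothesis \emph{already} translations, of amplitude $\tau_r(g)$, so no rescaling is needed; here $G$ fixes the end $+\infty$, and a direct computation of the Busemann cocycle returns the character $-\tau_r$. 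This is the origin of the sign in the definition of $E(G)$: it is exactly the value $\chi_r$ would take after conjugating a finite right endpoint out to $\infty$ by an exponential coordinate change.

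For the reverse inclusion, equivalently $S(G)\smallsetminus E(G)\subseteq\Sigma^1(G)$, I would verify the Cayley-graph criterion for every class $[\chi]$ outside $E(G)$, namely that the subgraph spanned by the vertices with $\chi\geq 0$ is connected. The torsion hypothesis on $G/(\ker\sigma_\ell\cdot\ker\tau_r)$ is the half-line substitute for the independence condition of \cite{BNS}: it forces a power of every generator to be a product of an element of $\ker\sigma_\ell$ and an element of $\ker\tau_r$ (one of bounded support), and, together with irreducibility, it yields the commutator relations on which the connectivity argument of Theorem~8.1 of \cite{BNS} rests. For $[\chi]\notin E(G)$ one selects an element $t$ with $\chi(t)>0$ whose germ at the relevant end is contracting — slope below $1$ at $0$, or negative translation amplitude at $\infty$, according to which end $\chi$ points away from — and uses it as the stable letter; the two ends are handled independently and then combined, just as the two endpoints are in Section \ref{sec:Proof-first-main-theorem}.

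The hard part will be the end $\infty$. It is \emph{not} PL-conjugate to a finite endpoint: the coordinate change turning its translational germs into the derivative germs of a finite endpoint is exponential and so destroys the piecewise-linear structure, and no genuine topological conjugacy of $G$ into $\PL_o$ of a compact interval exists. Consequently Theorem \ref{thm:Generalization-of-BNS-Theorem-8.1} cannot simply be invoked; the local model at $\infty$, the fixed-end tree action there, and the verification that $\ker\tau_r$ plays the role that the right-hand derivative kernel plays in \cite{BNS} must all be re-established directly. Getting the Busemann character at $\infty$ to emerge as $-\tau_r$ rather than $\tau_r$, and checking that the torsion hypothesis genuinely delivers the commutator relations near $\infty$, are the points I expect to require the most care.
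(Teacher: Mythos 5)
Your overall architecture matches the paper's: two inclusions, a contracting element plus a compactly supported element feeding the Cayley-graph $\Sigma^1$-criterion for membership, and a passage from finitely generated pieces to the whole group. For the inclusion $E(G)\subseteq\Sigma^1(G)^c$ you take a genuinely different route: Busemann characters of fixed-end $\R$-tree actions (Brown's formulation), with the correct sign analysis giving $-\tau_r$ at the end $\infty$. The paper instead applies its explicit submonoid criterion (Proposition \ref{prp:Non-membership-for-arbitrary-groups}) to the monoids $M_k=\{g\in G_{(-\tau_r)}\mid g^{-1}\text{ is a translation on }[k,\infty[\,\}$, verifying the three conditions \eqref{eq:Measured-submonoid} by hand; this keeps the whole argument inside the Cayley-graph framework and avoids setting up the tree machinery, but both roads lead to the same place.

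There is, however, a genuine gap in your reduction step. You propose to write $G$ as a union of finitely generated subgroups $G_n$ and to ``arrange that each $G_n$ is irreducible and still satisfies that $G_n/(\ker\sigma_\ell\cdot\ker\tau_r)$ is a torsion group,'' thereby reducing everything to the finitely generated case. This is impossible in general: by the argument of Lemma \ref{lem:Irreducibility-PL}(ii), if $\sigma_\ell$ vanishes on $G$ (a case the theorem explicitly allows --- $E(G)$ then simply omits $[\chi_\ell]$), the support of no element of $G$ has $0$ as a boundary point, so the interior of the half line cannot be covered by the supports of finitely many elements and \emph{no} finitely generated subgroup of $G$ is irreducible; the same obstruction occurs at $\infty$ when $\tau_r$ vanishes. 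The paper sidesteps this by never requiring the approximating subgroups to be irreducible or to satisfy the torsion hypothesis: irreducibility of the full group $G$ is used once, inside Lemma \ref{lem:Existence-elements-f-and-h-I-half-line} (and its compact-end counterpart), to manufacture the pair $f,h$; afterwards any finitely generated $G_1\le G_\chi$ containing $f$ and $h$ supports the commutator-relation argument of Lemma \ref{lem:Sufficient-condition-for-point-in-Sigma1}, and Proposition \ref{prp:Membership-for-infinitey-generated-groups} glues these together since $\chi$ is nonzero on every pairwise intersection. Relatedly, the torsion hypothesis is not used to split generators as products from the two kernels; its role is that a character with infinite torsion-free image cannot vanish on $\ker\sigma_\ell\cdot\ker\tau_r$, which is exactly what guarantees that one of the two cases of the key lemma applies to every $[\chi]\notin E(G)$.
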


There is a similar result for $I$ a line.
In order to state it,
I  introduce the homomorphism $\lambda \colon G \to \Aff_o(\R)$
that associates to $g \in G$ the affine map $\lambda(g)$ coinciding with $g$ near $-\infty$.
One needs also the homomorphism $\tau_\ell \colon G \to \R$;
it is only defined if $\im \lambda$ is made up of translations 
and it sends $g \in G$ to the amplitude of $\lambda(g)$.
The result reads then as follows:
\begin{thm}
\label{thm:Generalization-of-BNS-Theorem-8.1-line}
Let $G$ be a subgroup of $\PL_o(\R)$ with the property 
that every element of $\im \lambda \cup \im \rho$ is a translation,
and let $E(G)$ be the subset of $S(G)$ represented by those 
among the homomorphism $\tau_\ell$ and $-\tau_r$ that are not zero.
If $G$ is irreducible 
and if $G/(\ker \tau_\ell \cdot \ker \tau_r)$ is a torsion group 
then formula \eqref{eq:Sigma1c-for-PL-groups-half-line} holds.
\end{thm}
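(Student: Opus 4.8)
The plan is to run the proof of Theorem~\ref{thm:Generalization-of-BNS-Theorem-8.1} with two systematic substitutions, so I begin by recording how the hypotheses will be used. The assumption that $G/(\ker\tau_\ell\cdot\ker\tau_r)$ be a torsion group is, exactly as in Theorem~\ref{thm:Generalization-of-BNS-Theorem-8.1}, a weakening of the independence condition of \cite{BNS}: it guarantees that for every $g\in G$ some power $g^k$ factors as $g^k=ab$ with $a$ the identity near $-\infty$ and $b$ the identity near $+\infty$, and this is precisely the input the commutator calculus requires. The dictionary translating the compact setting into the present one is $\chi_\ell\leftrightarrow\tau_\ell$ and $\chi_r\leftrightarrow-\tau_r$, and the sign is forced by orientation: an expanding germ at a finite left endpoint pushes points rightwards \emph{into} the interval, as does a translation with $\tau_\ell>0$ near $-\infty$, whereas at the other end a translation with $\tau_r>0$ pushes points towards $+\infty$, i.e.\ \emph{away} from the bulk, reversing the sign relative to the expanding germ at a finite right endpoint. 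Under this dictionary the asserted formula is the verbatim transcription of \eqref{eq:Sigma1c-for-PL-groups}.

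For the inclusion $E(G)\subseteq\Sigma^1(G)^c$ I would argue geometrically, as Ken Brown does in \cite{Bro87b}. The line $\R$ is itself an $\R$-tree on which $G$ acts, and near each end the action is purely by translations, $g\cdot t=t+\tau_\ell(g)$ for $t$ near $-\infty$ and $g\cdot t=t+\tau_r(g)$ for $t$ near $+\infty$. Passing to the germ at $-\infty$ produces an action of $G$ on an $\R$-tree whose signed translation-length character is $\tau_\ell$, and Brown's $\R$-tree characterisation of the complement then places $[\tau_\ell]$ in $\Sigma^1(G)^c$ whenever $\tau_\ell\neq0$; the germ at $+\infty$ treats $[-\tau_r]$ symmetrically, the sign again coming from the orientation of that end. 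These two points are not visible from the abelianised homomorphisms $\tau_\ell,\tau_r$ in isolation, a bare translation action on $\R$ carrying no such information, so it is essential that the germ actions retain the genuine end structure of $G$.

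For the reverse inclusion $S(G)\smallsetminus E(G)\subseteq\Sigma^1(G)$ in the finitely generated case I would invoke the Cayley-graph definition of $\Sigma^1$ directly. The engine, replacing the endpoint-contraction argument of \cite[Theorem~8.1]{BNS}, is a displacement lemma at infinity: if $g\in\ker\tau_\ell\cap\ker\tau_r$ is compactly supported and $h\in G$ satisfies $\tau_r(h)>0$, then for all large $n$ the conjugate $h^{\,n}gh^{-n}$ has its support pushed so far towards $+\infty$ that it commutes with any prescribed finite family of compactly supported elements. These relations, together with the symmetric statement using an element with $\tau_\ell(h)\neq0$ at the end $-\infty$, show that for every character $[\chi]$ distinct from $[\tau_\ell]$ and $[-\tau_r]$ the subgraph of the Cayley graph spanned by the vertices with $\chi\geq0$ is connected, that is, $[\chi]\in\Sigma^1(G)$.

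It remains to remove the finite-generation hypothesis, and this, together with the displacement lemma, is where I expect the real work to lie. Following the scheme sketched for Theorem~\ref{thm:Generalization-of-BNS-Theorem-8.1}, I would write $G=\bigcup_i G_i$ as a directed union of finitely generated subgroups, arrange that each $G_i$ is again irreducible and has $\tau_\ell,\tau_r$ non-trivial on it so that the finitely generated result applies, and then transfer connectivity from the $G_i$ to $G$ by means of actions on suitable measured trees or $\R$-trees. The two delicate points are, first, checking that translation at infinity supplies exactly the commutator control that endpoint expansion supplied in the compact case, uniformly enough to survive the limit; and second, organising the bootstrap so that the exceptional points $[\tau_\ell]$ and $[-\tau_r]$ are neither accidentally created nor destroyed when one of $\tau_\ell,\tau_r$ happens to vanish, which is precisely the degenerate case that Theorem~\ref{thm:Generalization-of-BNS-Theorem-8.1} was designed to accommodate.
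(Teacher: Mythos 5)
Your dictionary ($\chi_\ell\leftrightarrow\tau_\ell$, $\chi_r\leftrightarrow-\tau_r$, with the sign forced by orientation) is the right one, and your treatment of the membership direction matches the paper's: Lemma \ref{lem:Existence-elements-f-and-h-I-line} produces exactly the elements $f$ (a translation near the relevant end, with $\tau_\ell(f)<0$ resp.\ $\tau_r(f)>0$) and $h$ (supported near that end) with positive $\chi$-values that your ``displacement lemma'' calls for, and the passage to infinitely generated $G$ goes through Proposition \ref{prp:Membership-for-infinitey-generated-groups}. One correction to your bootstrap, though: the paper does \emph{not} arrange for the approximating subgroups to be irreducible with $\tau_\ell,\tau_r$ nontrivial on them --- that is impossible precisely in the degenerate case where one of $\tau_\ell,\tau_r$ vanishes on all of $G$, the case you flag as delicate. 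Instead one takes $G_1=\gp(\XX_1)$ for an \emph{arbitrary} finite $\XX_1\subset G_\chi$ containing $f$ and $h$ and reruns the commutator argument of Lemma \ref{lem:Sufficient-condition-for-point-in-Sigma1} inside $G_1$ directly (all commutators are compactly supported because every element is a translation near $\pm\infty$); since any two such $G_1$ both contain $f$, the graph condition of Proposition \ref{prp:Membership-for-infinitey-generated-groups} is automatic. No finitely generated version of the theorem itself is invoked.

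The genuine gap is in the inclusion $E(G)\subseteq\Sigma^1(G)^c$. An action of $G$ on a line by translations with translation character $\tau_\ell$ exists for \emph{every} character of \emph{every} group (let $g$ act by $t\mapsto t+\tau_\ell(g)$), so ``the germ action at $-\infty$ has translation character $\tau_\ell$'' cannot by itself place $[\tau_\ell]$ in the complement. You concede that a bare translation action carries no such information, but the extra structure you appeal to is not there: every element of $\ker\tau_\ell$ \emph{is} the identity on some neighbourhood of $-\infty$, so the naive germ action at $-\infty$ factors through $\tau_\ell$ and is abelian. What is actually needed --- and what the paper supplies in Proposition \ref{prp:Roles-of-tau-ell-and-tau-r} --- is the submonoid $\tilde M_k$ of all $g\in G_{\tau_\ell}$ whose inverse is a translation on the \emph{fixed} half-line $]-\infty,-k]$, the verification that it satisfies conditions \eqref{eq:Measured-submonoid}, and, crucially, that it is \emph{proper} in $G_{\tau_\ell}$: non-abelianness (Lemma \ref{lem:G-non-abelian}) yields a nontrivial compactly supported commutator, and irreducibility lets one conjugate its support into $]-\infty,-k]$, producing an element of $\ker\tau_\ell\smallsetminus\tilde M_k$; Proposition \ref{prp:Non-membership-for-arbitrary-groups} then concludes. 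This is where both standing hypotheses enter the non-membership half, and your sketch makes no use of them there. If you insist on Brown's $\R$-tree formulation you must build a genuinely non-abelian action encoding this submonoid structure, which is substantially more than passing to the germ of the translation action.
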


\smallskip

\emph{Acknowledgements.} 
I thank Matt Brin and Matt Zaremsky for very helpful discussions,
and, in particular for pointing out Example \ref{examples:Bounded-by-infinite-cyclic}d.

%
\section{Preliminaries}
\label{sec:Preliminaries-PL}
%
Let $I$ be an interval of $\R$ with non-empty interior
and let $\PL_{o}(I)$ denote the group of all \emph{o}rientation preserving, 
finitary piecewise linear homeomorphisms with supports contained in $I$.
Each homeomorphism $g \in\PL_{o}(I)$ is differentiable at all but finitely many points;
if $I$ is bounded from below with $a = \inf I$,
the right derivative of $g$ in $a$ exists;
similarly, if $I$ is bounded from above with $c = \sup I$
the left derivative of $g$ in  $c$ exists. 
If $I = [a, c]$ is compact,
these derivatives give rise to the homomorphisms $\sigma_\ell$ and $\sigma_r$ of $\PL_o(I)$ 
into the multiplicative group $\R^\times_{>0}$.

If, on the other hand,
$I$ is not bounded from below
then each $g \in \PL_o(\R)$ coincides near $-\infty$ with a unique affine map $\lambda(g)$ of $\R$
and the assignment $g \mapsto \lambda(g) $ is a homomorphism into the affine group 
$\Aff_o(\R) \iso \R_{\add} \rtimes \R^\times_{>0}$ of $\R$.
If $I$ is not bounded from above 
one obtains similarly a homomorphism $\rho \colon \PL_o(I) \to \Aff_o(\R)$.
%
\subsection{Subgroup  $\BPL_o(I)$}
\label{ssec:Subgroup-B}
The fact that the elements of $\PL_o(I)$ are piecewise linear implies
that the supports of the elements in the kernels of $\sigma_\ell$ or $\lambda$,
and  of $\sigma_r$ or $\rho$ have a property  that will be crucial for the sequel.
Let $g$ be an element of $\PL_o(I)$.

If $I$ is the compact interval $[a,b]$ and if $\sigma_\ell(g) = 1$ 
the support of $g$ is contained in an interval of the form  $]a+\varepsilon, c[$ 
for some positive real $\varepsilon $; 
similarly, if $\sigma_r(g)= 1$ then $\supp g$ is contained 
in an interval of the form $[c-\varepsilon, c]$ with $\varepsilon > 0$. 
The intersection $\ker \sigma_\ell \,\cap \,\ker \sigma_r$ consists therefore of homeomorphisms 
whose supports do not accumulate in an end point of $I$. 
This subgroup will be denoted by $\BPL_o(I)$ 
and referred to it as the \emph{subgroup of homeomorphisms with bounded support}.
If $I$ is the half line $[0, \infty[$, analogous statements hold for $\sigma_\ell$ and $\rho$;
and if $I = \R$, 
similar statements are valid for $\lambda$ and $\rho$.
\subsection{Irreducible subgroups} 
\label{ssec:Irreducible-PL-groups}
%
A subgroup $G$ of $\PL_o(I)$ will be  called \emph{irreducible} 
if it has no fixed point in the interior of $I$
or, equivalently, if the supports of a generating set of $G$ cover the interior of $I$.
If $I$ is a compact interval, irreducible subgroups have two useful properties, 
stated in
\begin{lem}
\label{lem:Irreducibility-PL}
Let $G$ be an irreducible subgroup of  $\PL_0([a,c])$.
Then the following statements hold:
\begin{enumerate}[(i)]
\item if $a < t < t' < c$ there exists $g \in G$ with  $g(t') < t$;
\item the homomorphisms $\sigma_\ell$ and $\sigma_r$ are non-trivial if, and only if, 
$G$ has a finitely generated, irreducible subgroup.
\end{enumerate}
\end{lem}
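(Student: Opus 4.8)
The plan is to establish the two assertions separately, relying only on the elementary dynamics of the action on $[a,c]$ together with the structural facts recorded in Section \ref{ssec:Subgroup-B}. For statement (i), I would fix $a < t < t' < c$ and study the infimum $m = \inf\{g(t') \mid g \in G\}$ of the $G$-orbit of $t'$. Since each $g \in G$ fixes the endpoints and carries the interior of $[a,c]$ onto itself, one has $a \le m \le t'$; the goal is to show $m = a$, which at once yields a $g$ with $g(t') < t$. Suppose instead $m > a$, so that $m$ is an interior point. By irreducibility $m$ is moved by some $h \in G$, and after replacing $h$ by $h^{-1}$ if necessary we may assume $h(m) < m$; continuity then supplies an $\eta > 0$ with $h(m+\eta) < m$. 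Picking $g \in G$ with $g(t') < m + \eta$ and using that $h$ is increasing gives $(hg)(t') = h(g(t')) < h(m+\eta) < m$, contradicting the definition of $m$ as an orbit infimum. Hence $m = a < t$.

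For the ``if'' direction of statement (ii), I would show that a finitely generated irreducible subgroup $H \le G$ already forces $\sigma_\ell$ and $\sigma_r$ to be nontrivial, whence the same holds on the larger group $G$. Assume $\sigma_\ell$ vanishes on $H$ and choose generators $h_1, \ldots, h_n$. By the discussion of $\BPL_o(I)$ in Section \ref{ssec:Subgroup-B}, each $h_i \in \ker \sigma_\ell$ has support in some interval $[a + \varepsilon_i, c]$; taking $\varepsilon = \min_i \varepsilon_i > 0$, every point of $(a, a + \varepsilon)$ is fixed by all the generators, hence by $H$, contradicting the irreducibility of $H$. The symmetric argument at the right endpoint disposes of $\sigma_r$.

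For the ``only if'' direction, the core idea is that nontriviality of $\sigma_\ell$ and $\sigma_r$ furnishes elements whose supports reach the two endpoints, while a compactness argument covers the middle. Concretely, I would choose $g_\ell, g_r \in G$ with $\sigma_\ell(g_\ell) \neq 1$ and $\sigma_r(g_r) \neq 1$; since the relevant one-sided derivative differs from $1$, the support of $g_\ell$ contains an interval $(a, a + \delta)$ and that of $g_r$ an interval $(c - \delta', c)$. The compact set $K = [a + \delta/2, c - \delta'/2]$ is covered by the open supports of the elements of $G$, so by compactness finitely many $g_1, \ldots, g_k \in G$ have supports covering $K$. The supports of $g_\ell, g_r, g_1, \ldots, g_k$ then cover all of $(a,c)$, so $H = \langle g_\ell, g_r, g_1, \ldots, g_k \rangle$ is a finitely generated irreducible subgroup.

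The point that needs care is the ``only if'' direction: a naive compactness argument on the open interval $(a,c)$ can cover only its compact subsets, leaving gaps at both endpoints. The function of the hypothesis $\sigma_\ell, \sigma_r \neq 1$ is precisely to close these gaps, since it guarantees two elements whose supports accumulate at $a$ and at $c$ respectively; matching the sizes of the two endpoint intervals with the compact core $K$ is then merely bookkeeping. The infimum argument in part (i) is the only genuinely dynamical step, and I expect it to be routine once the increasing, endpoint-fixing nature of the homeomorphisms is used.
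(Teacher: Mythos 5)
Your proof is correct and follows essentially the same route as the paper: part (i) rests on the observation that the infimum of the $G$-orbit of $t'$ is a $G$-fixed point, hence equals $a$ by irreducibility (you merely spell out the standard contradiction argument), and part (ii) combines the fact that elements of $\ker\sigma_\ell$ (resp.\ $\ker\sigma_r$) have supports bounded away from $a$ (resp.\ $c$) with a compactness covering of the interior. No gaps.
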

\begin{proof} 
(i) The infimum of the $G$-orbit of $t'$ is a fixed point 
of $G$, and so it equals $a$ by the irreducibility of $G$.

(ii) For every compact subinterval $[t_1, t_2]$ in $]a, c[$ there exist finitely 
many elements of $G$ the supports of which cover the subinterval.
If $\sigma_\ell$ and $\sigma_r$ are non-trivial the supports of a finite 
subset of $G$ will therefore cover $]a, c[$ 
and hence generate an irreducible subgroup of $G$. 
On the other hand, if $\sigma_\ell$ or $\sigma_r$ is zero, 
say $\sigma_\ell$ is so, 
the support of no element of $G$ has $a$ as a boundary point; 
hence $]a, c[$ cannot be covered by the supports of finitely many elements. 
and so no finitely generated subgroup of $G$ is irreducible.
\end{proof}
%
\subsection{Invariant $\Sigma^1$: review of basic definitions and results}
\label{ssec:Invariant-Sigma1-Summary}
I turn now to the second key notion of this paper, the invariant $\Sigma^1$.
Given a group $G$,
consider the real vector space $\Hom (G, \R)$, 
made up of all homomorphisms $\chi$ of $G$ into the additive group of the field $\R$,
and \emph{define $S(G)$ to be the set of all rays} 
$[\chi] = \R^\times_{> 0} \cdot \chi$ 
that are contained in $\Hom (G, \R)$ and  emanate from the origin.
\footnote{If the abelianization $G_{\ab}$ of $G$ is a torsion group,
the set $S(G)$ is empty, a case that is of no interest in the sequel.}

If the group $G_{\ab}$ is finitely generated, 
the vector space $\Hom(G, \R)$ is finite dimensional 
and so it carries a unique topology,
induced by any of its Euclidean norms.
The set $S(G)$ inherits then a \emph{topology} 
and the resulting topological space is homeomorphic to the hyper-spheres 
in the vector space $\R^d$ of dimension $d = \dim_\Q (G_{\ab} \otimes \Q)$.

I need a further definition.
Given a group $G$ and a subset $K$, 
one sets
\begin{equation}
\label{eq:Definition-S(G,K)}
S(G, K) = \{[\chi] \in S(G) \mid \chi(K) = \{0\} \; \}.
\end{equation}
If one visualizes $S(G)$ as a sphere,
$S(G,K)$ is a \emph{great subsphere} of $S(G)$.

The invariant $\Sigma^1(G)$ of $G$ is a subset of $S(G)$.
It admits several equivalent definitions; 
in the sequel, I use the definition in terms of Cayley graphs.
\footnote{Alternate definitions are discussed in \cite[Section II.4.1]{BiSt92} 
and  in Chapter C of \cite{Str13a}.}
Let $\XX \subset G$ be a generating set of $G$ 
and let $\Gamma(G, \XX)$ be the Cayley graph with vertex set $G$ 
and positively oriented edges of the form $(g, g \cdot x)$ for all $(g,x) \in G \times \XX$.
Every non-zero homomorphism $\chi \colon G \to \R$ gives rise to a subgraph of this Cayley graph,
generated by the submonoid $G_\chi = \{g \in G \mid \chi(g) \geq 0 \}$,
and  denoted by $\Gamma(G, \XX)_\chi$.
One is interested in knowing whether or not this subgraph is connected.
The answer depends on the choices of $\XX$ and $\chi$;
it is, however, clear that the subgraph $\Gamma_\chi = \Gamma(G,\XX)_\chi$ 
does not change if one replaces $\chi$ by a positive multiple $r \cdot \chi$ of $\chi$.
This latter fact allows one to define a subset $\Sigma(G,\XX)$ of $S(G)$  by setting
\begin{equation}
\label{eq:DefinitionSigma1(G,XX)}
\Sigma(G, \XX) = \{[\chi] \in S(G) \mid \Gamma(G, \XX)_\chi \text{ is connected} \}.
\end{equation}

In \cite{BNS} a related subset, called $\Sigma_{G'}(G)$, is introduced and studied.
This older invariant is only defined for finitely generated groups $G$
and its definition does not involve the choice of a generating set.
This feature of $\Sigma_{G'}(G)$ is mirrored in $\Sigma(G, \XX)$ by the property 
that, for a finitely generated group $G$,
the subgraph  $\Gamma(G,\XX)_\chi$ is connected for \emph{every generating set} $\XX$ 
if it is connected for a \emph{single finite generating set} $\XX_f$.
\footnote{See the introduction of \cite[Chapter II]{BiSt92}, 
or  \cite[Section C2.1]{Str13a} for proofs.}
This fact induced Bieri-Strebel to adopt the following definition for the invariant $\Sigma^1(G)$
of an arbitrary group $G$:
\begin{equation}
\label{eq:DefinitionSigma1(G)}
\Sigma^1(G) = \{[\chi] \in S(G) \mid \Gamma(G, \XX)_\chi 
\text{ is connected for every generating set $\XX$} \}.
\end{equation}

\begin{remark}
\label{Definition-character}
In the context of the invariant $\Sigma^1$ the elements of $\Hom(G, \R)$ 
are often called \emph{characters}. 
I adopt this usage in the sequel.
\end{remark}

I continue with criteria that allow one to show 
that the ray $[\chi]$, 
passing through a given non-zero character $\chi \colon G \to \R$, 
lies in $\Sigma^1(G)$.
If $G$ is \emph{finitely generated} one has the so-called $\Sigma^1$-criterion;
it asserts that $[\chi] \in \Sigma^1(G)$ if, and only if,
$G$ admits a \emph{finite set of relations} of a certain form;
for details see \cite[Section I.3]{BiSt92} or \cite[Section A3]{Str13a}.
The form of the relations occurring in the statement of the criterion implies 
that $\Sigma^1(G)$ is an \emph{open} subset of the sphere $S(G)$.

Assume now $G$ is an infinitely generated groups.
Several criteria are then available;
two will be used in the sequel.
The first of them is
\begin{prp}
\label{prp:Membership-for-infinitey-generated-groups}
Let $G$ be an infinitely generated group 
and let $\chi \colon G \to \R$ be a non-zero character.
Suppose $G$ is the union of a collection $\{G_j \mid j \in J\}$
of finitely generated subgroups $G_j$ of $G$ which enjoy the following properties:
\begin{enumerate}[(i)]
\item the restriction $\chi_j = \chi \restriction{G_j}$ is non-zero  
and $[\chi_j] \in \Sigma^1(G_j)$ for every $j \in J$,
\item the combinatorial graph $\GG(\chi)$, having vertex set $J$ 
and edge set made up of all pairs $\{j_1,j_2\}$ with $\chi(G_{j_1} \cap G_{j_2}) \neq \{0\}$,
is connected.
\end{enumerate}
Then $[\chi] \in \Sigma^1(G)$.
\end{prp}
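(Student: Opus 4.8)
The plan is to verify the defining property \eqref{eq:DefinitionSigma1(G)} head‑on: I fix an \emph{arbitrary} generating set $\XX$ of $G$ and prove that the subgraph $\Gamma(G,\XX)_\chi$ is connected; since $G$ is infinitely generated, independence of the generating set is not available and every $\XX$ must genuinely be treated. As $\chi(1)=0$, the identity is a vertex of $\Gamma(G,\XX)_\chi$, so connectedness is equivalent to the statement that each $g\in G_\chi$ can be joined to $1$ by an edge path all of whose vertices lie in $G_\chi$. To set this up I write $g$ as a word $x_1^{\varepsilon_1}\cdots x_n^{\varepsilon_n}$ over $\XX$, producing the partial products $p_0=1,\dots,p_n=g$; these are joined by $\XX$‑edges, but the naive path may leave $G_\chi$. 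Each letter $x_i$ lies, by $G=\bigcup_{j}G_j$, in some $G_{j_i}$. Invoking hypothesis (ii), I enlarge the finite index set $\{j_1,\dots,j_n\}$ to a finite \emph{connected} subgraph $F$ of $\GG(\chi)$, and set $K=\langle G_j\mid j\in F\rangle$, a finitely generated subgroup containing all the $p_i$ and all the $x_i$.

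The next step is the finite gluing: I claim $[\chi\restriction K]\in\Sigma^1(K)$. Since $K$ is finitely generated I may prove this by induction on $|F|$ along a spanning tree of the connected graph $\GG(\chi)\restriction F$. The base case is a single $G_j$, where the hypothesis $[\chi_j]\in\Sigma^1(G_j)$ is given; the inductive step adjoins one further piece $G_{j'}$ meeting the part built so far in an edge of $\GG(\chi)$, i.e.\ across a subgroup on which $\chi$ is non‑zero (this is exactly where condition (i), guaranteeing $\chi_j\neq 0$, and condition (ii) are used). The two‑piece amalgamation — two finitely generated groups, each carrying $\chi$ in $\Sigma^1$, sharing an element of non‑zero $\chi$‑value — is the combinatorial heart of this step: the shared element supplies the edge that keeps the two positive monoids in one component, and the $\Sigma^1$‑criterion for finitely generated groups then yields the finite set of relations needed to conclude $[\chi\restriction K]\in\Sigma^1(K)$.

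Finally I transfer this back to the ambient graph, and I expect this to be the \emph{main obstacle}. Knowing $[\chi\restriction K]\in\Sigma^1(K)$ with $K$ finitely generated, the independence of the invariant from the generating set gives a path from $1$ to $g$ inside $\Gamma(K,\ZZ)_\chi$ for a convenient finite generating set $\ZZ$ of $K$. The delicate point is that the moves of this path use $\ZZ$‑generators, which are elements of $K\subseteq G$ but need not belong to $\XX$, so each such move must be re‑realised by a sequence of $\XX$‑edges whose vertices remain in $G_\chi$ — and expressing a generator as an $\XX$‑word can drive the intermediate $\chi$‑values below zero. I would resolve this by a shifting argument: the finite presentation furnished by the $\Sigma^1$‑criterion for $K$ gives finitely many relations, valid in $G$, that reroute each problematic move upward, and an element of $K$ with $\chi>0$ (available since $\chi\restriction K\neq 0$ by (i)) provides the uniform upward shift keeping all partial products non‑negative; the finiteness of $F$ and of the relation set makes the required bound uniform. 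This reroutes the word for $g$ entirely within $G_\chi$, so $g$ is joined to $1$ in $\Gamma(G,\XX)_\chi$. As $g\in G_\chi$ and the generating set $\XX$ were arbitrary, $\Gamma(G,\XX)_\chi$ is connected for every generating set, whence $[\chi]\in\Sigma^1(G)$.
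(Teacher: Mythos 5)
The paper does not actually prove this proposition: it cites Section II.5.3 of \cite{BiSt92} and Section C2.6a of \cite{Str13a}, and Section \ref{ssec:Results} signals that the infinitely generated case is handled ``by actions on suitable measured trees or on $\R$-trees'', i.e.\ by a characterization of $\Sigma^1$ that makes no reference to a generating set (the submonoid description underlying Proposition \ref{prp:Non-membership-for-arbitrary-groups} is of the same nature). Your steps 1--3 --- placing the letters of an $\XX$-word for $g$ into finitely many $G_j$, enlarging to a finite connected $F\subseteq J$, and gluing along a spanning tree to obtain $[\chi\restriction{K}]\in\Sigma^1(K)$ for $K=\gp\bigl(\bigcup_{j\in F}G_j\bigr)$ --- are sound and reproduce a genuine ingredient of the standard treatment.

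The gap is the transfer step, exactly where you flag the ``main obstacle'', and the shifting argument you sketch does not close it. Membership $[\chi\restriction{K}]\in\Sigma^1(K)$ gives connectivity of $\Gamma(K,\ZZ)_\chi$ for every set $\ZZ$ that \emph{generates} $K$, and there is no reason why $K$ should admit a generating set consisting of elements reachable from $1$ by $\XX$-paths of non-negative $\chi$-defect: the subgroup generated by $\XX\cap K$, or by the letters $x_1,\dots,x_n$, may be a proper subgroup of $K$ about whose $\Sigma^1$ the hypotheses say nothing. Substituting $\XX$-words for the finitely many generators in $\ZZ_0$ only yields a path from $1$ to $g$ in $\Gamma(G,\XX)$ whose vertices satisfy $\chi\ge -D$, where $D$ bounds the defects of those words; and being joined within $\{\chi\ge -D\}$ does not imply being joined within $\{\chi\ge 0\}$ (in a free group with $\chi(a)=\chi(b)=1$, the vertices $1$ and $a^{-1}b$ are joined in $\{\chi\ge -1\}$ but not in $\{\chi\ge 0\}$). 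Both of your proposed repairs fail: the relations furnished by the $\Sigma^1$-criterion for $K$ are relations between $\ZZ$-words, so rerouting with them again produces $\ZZ$-paths rather than $\XX$-paths; and left-translating the path by an element $u$ with $\chi(u)\ge D$ yields a path from $u$ to $ug$, leaving you with the original problem of joining $1$ to $u$ and $ug$ to $g$ inside $G_\chi$ by $\XX$-paths, $u$ itself being available only as an $\XX$-word of uncontrolled defect. This is precisely the point at which one must abandon the Cayley-graph formulation and pass to a generating-set-free criterion --- proper submonoids $M\subseteq G_\chi$ satisfying \eqref{eq:Measured-submonoid}, equivalently exceptional actions on measured trees or $\R$-trees: such objects restrict to the subgroups $G_j$ without any reference to $\XX$, which is what allows hypothesis (i) to force each $G_j$ to act on a line and hypothesis (ii) to force these lines to coincide, whence the conclusion.
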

\begin{proof}
See \cite[Section II.5.3]{BiSt92} or \cite[Section C2.6a]{Str13a}).
\end{proof}

The second criterion involves a submonoid $M \subset G_\chi$ with some special properties.
\begin{prp}
\label{prp:Non-membership-for-arbitrary-groups}
Let $G$ be a group  and let $\chi \colon G \to \R$ be a non-zero character of $G$.
Suppose $G_\chi = \{g \in G \mid \chi(g) \geq 0\}$ contains a submonoid $M$ 
enjoying the following properties:
\begin{equation}
\label{eq:Measured-submonoid}
M \cap \ker \chi = M \cap M^{-1}, \quad (M \cdot M^{-1}) \cap G_\chi = M, \quad \gp(M) = G.
\end{equation}
If $M \varsubsetneqq G_\chi$ then $[\chi] \notin \Sigma^1(G)$.
\end{prp}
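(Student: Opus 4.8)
The plan is to show that the subgraph $\Gamma(G,\XX)_\chi$ fails to be connected for a suitable generating set $\XX$, by constructing a partition of its vertex set $G_\chi$ that no edge can cross. The hypotheses on $M$ are tailored to produce exactly such an obstruction, and the strict inclusion $M \varsubsetneqq G_\chi$ guarantees that the obstruction is non-trivial.

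First I would unpack the three conditions in \eqref{eq:Measured-submonoid}. The condition $\gp(M) = G$ says $M$ generates $G$, so I may take a generating set $\XX$ contained in $M$; this is the natural choice, since then every positive edge $(g, g\cdot x)$ has $x \in M$. The second condition $(M \cdot M^{-1}) \cap G_\chi = M$ is the key algebraic constraint: it says that if a product $g = m_1 m_2^{-1}$ with $m_1, m_2 \in M$ happens to satisfy $\chi(g) \geq 0$, then already $g \in M$. I would use this to argue that the reachability relation in $\Gamma_\chi$ starting from the identity cannot leave $M$. Concretely, two vertices $g, g'$ joined by an edge differ by an element of $\XX \subset M$ or its inverse, so any path in $\Gamma_\chi$ expresses $g^{-1} g'$ as a product of elements of $M \cup M^{-1}$, all of whose partial-product vertices lie in $G_\chi$.

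The heart of the argument is then to prove that the connected component of $\Gamma_\chi$ containing the identity is exactly the vertex set $M$, whence $G_\chi \smallsetminus M$ is a non-empty (by $M \varsubsetneqq G_\chi$) union of other components, so $\Gamma_\chi$ is disconnected and $[\chi] \notin \Sigma^1(G)$. To see that the component of $1$ is contained in $M$, I would show by induction along an edge-path from $1$ to a vertex $g$ that $g \in M$: if $g' \in M$ and $g = g' x^{\pm 1}$ with $x \in \XX \subset M$ and $\chi(g) \geq 0$, then $g \in M \cdot M^{-1}$ (using $g' \in M$ and $x^{\mp 1} \in M^{-1}$ in the inverse case, or $g' x \in M$ outright in the forward case), and since $\chi(g) \geq 0$ the second condition of \eqref{eq:Measured-submonoid} forces $g \in M$. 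Conversely every element of $M$ is reached from $1$ by reading off a word in $\XX$; here I need the first condition $M \cap \ker\chi = M \cap M^{-1}$ to ensure the partial products stay in $G_\chi$, i.e.\ that one can order the generators so that the path never dips below the level set of $1$.

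The main obstacle I anticipate is precisely this last point: verifying that one can traverse $M$ inside the half-graph $\Gamma_\chi$ without ever crossing into $\{g \mid \chi(g) < 0\}$, and more delicately, confirming that the inductive step in the inverse direction $g = g' x^{-1}$ is genuinely controlled. The subtlety is that $g = g' x^{-1}$ with $g' \in M$ and $x \in M$ gives $g \in M \cdot M^{-1}$ directly, so the forward case is the easy one, but I must be careful that the intermediate vertices of the chosen path all satisfy $\chi \geq 0$. I expect the first condition of \eqref{eq:Measured-submonoid} to be exactly what rules out degenerate situations where an element of $M$ and its inverse both lie above the kernel, which is what would otherwise allow edges to leak out of $M$. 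Establishing this clean separation — that $M$ is simultaneously ``upward closed enough'' to be a union of components and that no edge escapes it — is the crux, and it is where the three conditions must be used in concert rather than separately.
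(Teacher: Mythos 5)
Your proposal is correct and follows essentially the same route as the paper: choose a generating set $\XX \subseteq M$, and show by induction along edge-paths that every vertex of $\Gamma(G,\XX)_\chi$ reachable from $1$ lies in $M$, the forward step using that $M$ is a submonoid and the backward step $g = g'x^{-1}$ using the condition $(M\cdot M^{-1}) \cap G_\chi = M$. One remark, though: the step you single out as ``the crux'' --- proving the \emph{converse} containment, that every element of $M$ is reachable from $1$ inside $\Gamma_\chi$, so that the component of $1$ is \emph{exactly} $M$ --- is not needed at all. Since $M \varsubsetneqq G_\chi$, the containment of the reachable set in $M$ already produces a vertex of $G_\chi \smallsetminus M$ in a different component, which is all that disconnectedness requires; and in any case, taking $\XX = M$ makes every $m \in M$ adjacent to $1$ by a single edge with both endpoints in $G_\chi$, so there is no issue of ``ordering the generators'' or of partial products dipping below the level set. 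Relatedly, the first condition $M \cap \ker\chi = M \cap M^{-1}$, which you invoke for that converse, plays no role in this direction of the argument (it belongs to the full axiom package for such submonoids used elsewhere). Trimming that detour leaves exactly the paper's proof.
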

\begin{proof}
A justification can be extracted from section II.5.5 and Theorem II.4.1 in \cite{BiSt92},
but as a direct proof is short and informative, 
I supply one here.

Choose a subset $\XX$ of $M$ that generates $G$,
for instance $\XX = M$.
Use it to define the Cayley graph $\Gamma(G, \XX)$.
I claim that \emph{every path that starts in 1 and runs inside $\Gamma(G, \XX)_\chi$ 
ends in a point $g \in M$}.
The proof is by induction on the length $k$ of the path $p = (1, y_1, y_1 y_2, \ldots, y_1 \cdots y_k)$ 
leading from 1 to the vertex $g$.
The claim is obvious for $k = 0$;
assume therefore that $k > 0$ and that the product $f = y_1 y_2 \cdots y_{k-1}$ lies in $M$.
Two cases arise:
if $y \in \XX$ 
then  $y_1 y_2 \cdots y_k$ is the product of the  two elements $f$ and $y_k$ of $M$,
and so it lies in the submonoid $M$;
if $y_k$ is the inverse of $x \in \XX$,
then $g$ is the product $f \cdot x^{-1}$ with $f$ and $x$ in $M$; 
so it lies in $M$ by the second property of $M$ and the fact that $\chi_\ell(g) \geq 0$.

It is now easy to see 
that the subgraph $\Gamma_\chi = \Gamma(G; \XX)_\chi$ 
of the Cayley graph $\Gamma(G; \XX)$ is not connected.
Indeed, $M$ is a proper subset of $G_\chi$ containing all the endpoints of paths
that start in the origin and run inside the subgraph.
As $\XX$ generates $G$, the ray $[\chi]$ lies therefore outside of $\Sigma^1(G)$. 
\end{proof}

\subsection{Notation} 
\label{sssec:Notation-up-to-section-4}
In the remainder of this section and in Sections 
\ref{sec:Proof-first-main-theorem}
and \ref{sec:Discussion-hypotheses},
the interval $I$ is compact, say $[a,c]$,
the group $G$ is a subgroup of $\PL_o([a,c])$
and $\sigma_\ell$, $\sigma_r$ are the restrictions to $G$ of the homomorphisms of $\PL_o([a,c])$
denoted formerly by these symbols.

\subsection{Three basic results about irreducible groups} 
\label{sssec:Basic-results-irreducible-groups}
I move on to three results that will be used in the proofs of the main results.

%
\subsubsection{Rôles of $\sigma_\ell$ and $\sigma_r$ for $\Sigma^1(G)$}
\label{sssec:Significance-sigma_ell-sigma_r}
The first result explains the significance of $\sigma_\ell$ and $\sigma_r$ 
for the invariant $\Sigma^1(G)$ of an irreducible  subgroup $G$ of $\PL_0([a,c])$.
The maps $\sigma_\ell$ and $\sigma_r$ are homomorphisms into the multiplicative group of the positive reals,
and so the compositions $\chi_\ell = \ln\, \circ \,\sigma_\ell$ and $\chi_r = \ln \,\circ \,\sigma_r$
are \emph{characters} of $G$,
\ie homomorphisms of $G$ into the additive group of $\R$.
\begin{prp}
\label{prp:Roles-of-chi-ell-and-chi-r}
Let $G$  be a non-abelian, irreducible  subgroup of $\PL_o(I)$.
If $\chi_\ell$ is non-zero it represents a point in the complement of $\Sigma^1(G)$,
and similarly for $\chi_r$.
\end{prp}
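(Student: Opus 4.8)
The plan is to show that $[\chi_\ell] \notin \Sigma^1(G)$ (the argument for $\chi_r$ being symmetric) by exhibiting a proper submonoid $M \subsetneqq G_{\chi_\ell}$ that satisfies the three conditions in equation \eqref{eq:Measured-submonoid} and then invoking Proposition \ref{prp:Non-membership-for-arbitrary-groups}. So the first step is to identify the right candidate for $M$. Since $\chi_\ell = \ln \circ\, \sigma_\ell$ measures the logarithm of the slope at the left endpoint $a$, the natural guess is to let $M$ consist of those $g \in G$ that do not move points near $a$ to the left, i.e.\ those $g$ with $g(t) \geq t$ for all $t$ in some right-neighbourhood of $a$ (equivalently, $g$ is the identity or pushes points to the right near $a$). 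A clean way to formalise this is to set
\begin{equation*}
M = \{g \in G_{\chi_\ell} \mid g(t) \geq t \text{ for all } t \text{ sufficiently close to } a\}.
\end{equation*}
I would need to check this is a submonoid: closure under products is clear since if both $g,h$ push points right near $a$ then so does $gh$, and the identity is in $M$.

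The heart of the verification is the three conditions in \eqref{eq:Measured-submonoid}. For $M \cap \ker \chi_\ell = M \cap M^{-1}$: if $g \in M$ has slope exactly $1$ at $a$ (so $\chi_\ell(g)=0$), then near $a$ the map $g$ is the identity (a PL-homeomorphism with derivative $1$ at the endpoint and no nearby breakpoints must fix a right-neighbourhood of $a$), hence $g^{-1} \in M$ too; conversely if $g$ and $g^{-1}$ both lie in $M$ then both push right near $a$, forcing $g$ to fix a neighbourhood of $a$ and so $\chi_\ell(g)=0$. For $(M \cdot M^{-1}) \cap G_{\chi_\ell} = M$: given $g = h k^{-1}$ with $h,k \in M$ and $\chi_\ell(g) \geq 0$, I would argue that near $a$ the slope condition plus the monoid structure force $g$ itself to push points right near $a$; here the key leverage is that elements with positive $\chi_\ell$ strictly dominate near $a$ while those with $\chi_\ell = 0$ are locally the identity, so the sign of $\chi_\ell(g)$ determines the local behaviour of $g$ unambiguously. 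The third condition $\gp(M) = G$ should follow because $G$ is generated by $M$ together with inverses — indeed every element of $G$ either lies in $M$ or has its inverse's local-right-pushing property, and one checks $M$ and $M^{-1}$ together reach all of $G$.

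Finally, to apply Proposition \ref{prp:Non-membership-for-arbitrary-groups} I must exhibit $M \varsubsetneqq G_{\chi_\ell}$, i.e.\ produce an element $g$ with $\chi_\ell(g) \geq 0$ that nonetheless moves some point near $a$ to the left. This is exactly where \emph{non-abelianity} and \emph{irreducibility} enter and is, I expect, the main obstacle. Using Lemma \ref{lem:Irreducibility-PL}(i), irreducibility lets me find elements dragging interior points arbitrarily far left toward $a$; combined with a commutator of two suitable elements whose slopes at $a$ cancel (so $\chi_\ell$ of the commutator is zero, or one arranges $\chi_\ell \geq 0$) but whose supports are staggered so that the product genuinely reverses direction on some small interval near $a$, one obtains the desired witness $g \notin M$. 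The non-abelian hypothesis guarantees such a non-commuting pair exists near $a$; the delicate part is choosing the two elements so their product lands in $G_{\chi_\ell} \smallsetminus M$ rather than accidentally in $M$, which is where I anticipate the real bookkeeping. Once such a $g$ is found, $M$ is a proper submonoid and Proposition \ref{prp:Non-membership-for-arbitrary-groups} yields $[\chi_\ell] \notin \Sigma^1(G)$, completing the argument.
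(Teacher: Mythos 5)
Your overall strategy --- build a submonoid $M \varsubsetneqq G_{\chi_\ell}$ satisfying \eqref{eq:Measured-submonoid} and invoke Proposition \ref{prp:Non-membership-for-arbitrary-groups} --- is exactly the paper's, but your choice of $M$ does not work, and the failure is fatal rather than a matter of bookkeeping. You set $M = \{g \in G_{\chi_\ell} \mid g(t) \geq t \text{ for all } t \text{ sufficiently close to } a\}$, with the neighbourhood allowed to depend on $g$. But every $g \in \PL_o([a,c])$ fixes $a$ and is \emph{linear} on some interval $[a, a+\varepsilon_g]$ with slope $D_a(g)$; hence $g(t) \geq t$ near $a$ holds if and only if $D_a(g) \geq 1$, i.e.\ if and only if $\chi_\ell(g) \geq 0$. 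Your $M$ is therefore all of $G_{\chi_\ell}$, and the witness you hope to produce in the last step --- an element with $\chi_\ell(g) \geq 0$ that moves some point near $a$ to the left --- cannot exist. The non-abelian and irreducibility hypotheses cannot rescue this: no amount of staggering supports produces such an element, because sufficiently close to $a$ every element is linear and its direction of motion is dictated by the sign of $\chi_\ell$.

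The fix is to replace the $g$-dependent neighbourhood by a condition on a \emph{fixed} interval, and the right condition is linearity rather than right-pushing. The paper takes $M_\delta = \{g \in G_{\chi_\ell} \mid g^{-1} \text{ is linear on } [a, a+\delta]\}$ for a fixed $\delta$. Then an element of $\ker\chi_\ell$ with nontrivial support squeezed inside $[a, a+\delta]$ (obtained, using irreducibility, by conjugating a nontrivial commutator --- this is where non-abelianness enters) lies in $G_{\chi_\ell} \smallsetminus M_\delta$, giving properness; and generation holds for $\delta$ small because conjugating by a high power of some $g_0$ with $\sigma_\ell(g_0) > 1$ contracts any given element's initial linearity interval into $[a,a+\delta]$. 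Note that the naive repair of your definition --- requiring $g(t) \geq t$ on a fixed $[a,a+\delta]$ --- still fails: a bump supported in $]a, a+\delta[$ with $g(t) \geq t$ lies in $M \cap \ker\chi_\ell$ but its inverse does not lie in $M$, violating the first condition of \eqref{eq:Measured-submonoid}. So the linearity condition is not cosmetic; it is what makes all three identities in \eqref{eq:Measured-submonoid} checkable.
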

\begin{proof}
Suppose $\chi_\ell = \ln \circ\, \sigma_\ell$ is not zero.
The aim is to construct a submonoid $M$ of $G$ 
that is properly contained in $G_{\chi_\ell}$ and satisfies properties \eqref{eq:Measured-submonoid}.
To that end,
we introduce a collection of submonoids $M_\delta$ of $G_{\chi_\ell}$ 
and verify then that many of the submonoids $M_\delta$ satisfy the stated properties.
\smallskip

Given a positive number $\delta < c-a$,
define the subset
\begin{equation}
\label{eq:Definition-M-sub-delta}
M_\delta = \{g \in G_{\chi_\ell} \mid g^{-1} \text{ is linear on } [a, a+ \delta] \}.
\end{equation}
This subset is, first of all, a \emph{submonoid of} $G$:
it clearly contains the identity on $\R$.
Assume now that $g_1$ and $g_2$ are in $M_\delta$.
Then $g_1^{-1}$ is linear on $[a,a+ \delta]$ with slope at most 1,
and so $g_2^{-1} \circ g_1^{-1}$ is linear on $[a,a+\delta]$, 
for $g_2^{-1}$ is linear on $[a, a+\delta]$ and  $g_1(a+\delta) \leq a +\delta$. 
Thus $g_1 \circ g_2$ is in $M_\delta$. 

We verify next 
that $M_\delta$ satisfies the  equality $M \cap M^{-1} = M \cap  \ker \chi$
for every $\delta < c-a$.
Suppose that  $g \in M_\delta \cap M_\delta^{-1}$.
Then $\chi(g) \geq 0$ and $\chi(g^{-1}) \geq 0$, whence $g  \in \ker \chi_\ell$.
It follows that $M_\delta \cap M_\delta^{-1} $ is contained in $M_\delta \cap  \ker \chi$.
Conversely, 
assume that $g$ lies in $M_\delta \cap \ker \chi$. 
Definition \eqref{eq:Definition-M-sub-delta}
implies then
that $g^{-1}$ is the identity on  $[a, a+ \delta]$,
whence $g$ is linear on $[a, a+ \delta]$ and so $g^{-1} \in M_\delta$,
again by definition \eqref{eq:Definition-M-sub-delta},
and thus the reverse inclusion  $M_\delta \cap  \ker \chi \subseteq M_\delta \cap M_\delta^{-1}$ holds.

Now to the  equality $(M_\delta \cdot M_\delta^{-1}) \cap G_{\chi_\ell} = M_\delta$.
Suppose $g_1$ and $g_2$ lie in $M_\delta$
and that $\chi_\ell(g_1 \circ g_2^{-1}) \geq 0$
or, equivalently, that $D_a(g_1) \geq D_a(g_2)$.
Then $g_1^{-1}$ is linear on $[a,a +\delta]$
with image $[a, a +\delta/D_a(g_1)]$, 
and $g_2$ is linear on $[a, a +\delta/D_a(g_2)]$.
Since $\delta/ D_a(g_1) \leq \delta/ D_a(g_2)$
this proves that $g_2 \circ g_1^{-1}$ is linear on $[a, a+ \delta]$
and thus $g_1 \circ g_2^{-1} \in M_\delta$. 
It follows  that 
$M_\delta \cdot M_\delta^{-1} \cap G_{\chi_\ell} $ is contained in $M_\delta$.
The reverse inclusion follows from the fact that  $\id \in M^{-1}_\delta$.

We next show that \emph{$M_\delta$ generates $G$ whenever $\delta$ is small enough}.
By assumption,
$\chi_\ell$ is non-zero.
So there exists an element $g_0 \in G$ with $\sigma_\ell(g_0) > 1$.
The PL-homeomorphism $g_0^{-1}$ is linear on some small interval beginning in $a$, 
say on $[a, a +\delta_0]$.
Moreover,
given $g \in G$, there is some $\varepsilon > 0$ so that $g^{-1}$ is linear on $[0, \varepsilon]$
and thus there exists a positive integer $m$ 
so that $g_0^{-m}$ maps $[a, a +\delta_0]$ linearly onto a subinterval of $[a, a +\varepsilon]$.
The product $g^{-1} \circ g_0^{-m}$ is then linear on $[a, a +\delta_0]$.
By increasing $m$, if need be,
we can also achieve that $g_0^m \circ g = (g^{-1} \circ g_0^{-m})^{-1}$ lies in $G_{\chi_\ell}$
and hence in  $M_{\delta_0}$. 
We conclude that there exists,
given $g \in G$, a positive integer $m$  and an element $h \in M_{\delta_0}$ 
so that $g$ equals $g_0^{-m} \circ h$;
every element of $G$ is thus a quotient of elements in $M_{\delta_0}$.

We, finally,  establish  
that $M_{\delta_0}$ \emph{is a proper submonoid of} $G_{\chi_\ell}$.
By assumption $G$ is non-abelian
and so there exist elements $f_1$, $f_2$ in $G$
whose commutator $ c = [f_1,f_2]$ is not the identity.
Since $G$ is irreducible,
a suitable conjugate $\act{f} c$ of $c$ is a non-trivial element 
whose support in contained in the interval $[a,a+ \delta_0]$. 
This conjugate lies in $\ker \chi_{\ell}$, but outside of $M_{\delta_0}$,
and proves that $M_{\delta_0}$ is a proper submonoid of $G_{\chi_\ell}$.

All taken together, we have shown
that the submonoid $M =M_{\delta_0}$ satisfies the properties stated in
\eqref{eq:Measured-submonoid}
and that $M \neq G_{\chi_\ell}$.
Proposition \ref{prp:Non-membership-for-arbitrary-groups} thus allows us to conclude
that $[\chi_\ell] \notin \Sigma^1(G)$.
The claim for $\chi_r$ can be proved similarly.
\end{proof}
\begin{remarks}
\label{remarks:Comments-on-proof-}
a) The points $[\chi_\ell]$ and $[\chi_r]$ may coincide;
a situation which we shall discuss in more detail in section 
\ref{ssec:Irreducible-groups-but-sigma_ell-sigma_r-dependent-part-1}.

b) Submonoids of $G_\chi$ satisfying the four properties enjoyed by $M_{\delta_0}$ 
are an important tool in the study of $\Sigma^1$; 
they allow one, in particular,  
to  give an alternate definition of $\Sigma^1(G)$;
see \cite[II.3.5]{BiSt92}.
\end{remarks}
\subsubsection{The second result}
\label{sssec:The-second-result}
%
The next lemma is more technical;
it will only be used inside the proofs of more important results.
\begin{lem}
\label{lem:Existence-elements-f-and-h}
Let $G$ be an irreducible subgroup of $\PL_o([a,c])$.
Assume $\chi \colon G \to \R$ is a character 
that satisfies one of the following two conditions:
\begin{equation*}
\text{(i) } [\chi] \neq [\chi_\ell] \text{ and }\chi(\ker \sigma_r) \neq \{0\},
\quad \text{or (ii) }
[\chi] \neq [\chi_r] \text{ and } \chi(\ker \sigma_\ell) \neq \{0\}.
\end{equation*}
In both cases there exist then a real $\delta > 0$ and elements $f$ and $h$ in $G$
with positive $\chi$-values.
If condition (i) holds, the elements $f$ and $h$ enjoy the following properties:
\begin{equation}
\label{eq:Consequence-i}
f \text{ is linear on } [a, a + \delta] \text{ with slope } < 1 \text{ and }
\supp h \subset [a, a + \delta]. 
\end{equation}
If condition (ii) holds, $f$ and $h$ have these properties:
\begin{equation}
\label{eq:Consequence-ii}
f \text{ is affine on } [c - \delta, c] \text{ with slope } < 1 \text{ and }
\supp h \subset [c - \delta, c].
\end{equation}
\end{lem}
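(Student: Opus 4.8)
\emph{The plan.} I would treat condition (i) in detail and then deduce condition (ii) from it by the orientation-reversing reflection $t \mapsto a + c - t$ of $[a,c]$, which interchanges the two endpoints and hence swaps $\sigma_\ell$ with $\sigma_r$ and $\chi_\ell$ with $\chi_r$. Conjugating $G$ by this reflection produces another subgroup of $\PL_o([a,c])$ for which condition (ii) for $G$ becomes condition (i); transporting the resulting $f$, $h$ back then gives the statement near $c$, so no separate argument for (ii) is needed. Throughout I use repeatedly that $\chi$, being a homomorphism into the \emph{abelian} group $\R$, is invariant under conjugation, i.e. $\chi(p\,g\,p^{-1}) = \chi(g)$ for all $p,g \in G$.

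Assume condition (i). Note that writing $[\chi_\ell]$ presupposes $\chi_\ell \neq 0$, and it is precisely this together with $[\chi]\neq[\chi_\ell]$ that produces $f$. The plan is to study the combined homomorphism $\varphi = (\chi_\ell, \chi) \colon G \to \R^2$ and show that its image $V = \varphi(G)$ meets the open quadrant $Q = \{(x,y) \mid x<0,\ y>0\}$: any $g$ with $\varphi(g)\in Q$ has $\sigma_\ell(g) < 1$ and $\chi(g) > 0$, and, $g$ being piecewise linear, it is linear with slope $\sigma_\ell(g) < 1$ on some $[a, a+\delta_f]$, so I may take $f = g$. To see $V \cap Q \neq \emptyset$ I distinguish two cases. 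If $\chi_\ell$ and $\chi$ are $\R$-linearly dependent, then $\chi = m\chi_\ell$, and $\chi_\ell \neq 0$ together with $[\chi]\neq[\chi_\ell]$ forces $m < 0$; any $g$ with $\chi_\ell(g) < 0$ (such $g$ exists as $\chi_\ell\neq 0$) then satisfies $\chi(g) = m\chi_\ell(g) > 0$. If $\chi_\ell$ and $\chi$ are linearly independent, then $V$ lies in no line through the origin, hence contains two $\R$-independent vectors $v_1, v_2$; the lattice $\Z v_1 + \Z v_2 \subseteq V$ has bounded covering radius while $Q$ contains balls of arbitrarily large radius, so $Q$ meets the lattice and a fortiori $V$.

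To build $h$ I would start from $\chi(\ker\sigma_r)\neq\{0\}$: choose $g \in \ker\sigma_r$ with $\chi(g)\neq 0$, replacing $g$ by $g^{-1}$ if necessary so that $\chi(g) > 0$. Since $\sigma_r(g) = 1$, the element $g$ is the identity near $c$ (subsection \ref{ssec:Subgroup-B}), so there is $b$ with $a < b < c$ and $g = \id$ on $[b,c]$, whence $\supp g \subseteq \,]a,b[$. Now fix one $\delta$ with $0 < \delta \leq \delta_f$ and $a + \delta < b$, for instance $\delta = \min(\delta_f,\, b-a)/2$; this choice keeps $f$ linear of slope $<1$ on $[a, a+\delta]$ and still allows irreducibility to be applied. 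By Lemma \ref{lem:Irreducibility-PL}(i), taken with $t = a+\delta$ and $t' = b$, there is $p \in G$ with $p(b) < a + \delta$. Setting $h = p\,g\,p^{-1}$, the support of $h$ equals $p(\supp g) \subseteq \,]a, p(b)[\, \subseteq\, ]a, a+\delta[$, while conjugation-invariance gives $\chi(h) = \chi(g) > 0$. Thus $f$, $h$ and $\delta$ have all the required properties, proving case (i).

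\emph{The main obstacle.} I expect the construction of $f$ to be the delicate point, since it is there that $[\chi]\neq[\chi_\ell]$ (and the tacit $\chi_\ell\neq 0$) is genuinely used; in the linearly independent case one cannot exhibit a single explicit element and must instead invoke the lattice/covering-radius argument. By contrast, the passage to $h$ is routine once one notices that conjugation leaves $\chi$-values unchanged and that Lemma \ref{lem:Irreducibility-PL}(i) compresses $\supp g$ into an arbitrarily short interval at the left endpoint; the only bookkeeping is to select a single $\delta$ that simultaneously serves $f$ and $h$.
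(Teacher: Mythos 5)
Your proof is correct and follows the same overall architecture as the paper's: first manufacture an element $f$ with $\chi(f)>0$ and $\chi_\ell(f)<0$ (hence linear with slope $<1$ on some $[a,a+\delta_f]$), then use $\chi(\ker\sigma_r)\neq\{0\}$ together with irreducibility to conjugate a $\chi$-positive, right-trivial element into $]a,a+\delta[$, and dispose of case (ii) by symmetry in the endpoints. The one place you genuinely diverge is the step you flag as the main obstacle, the production of $f$: you study the image of $(\chi_\ell,\chi)\colon G\to\R^2$ and, in the linearly independent case, hit the open quadrant $\{x<0,\ y>0\}$ by a lattice covering-radius argument. The paper obtains the same element in two lines from the standard monoid fact that $[\chi]\neq[\chi_\ell]$ (both rays defined, so both characters nonzero) forces $G_\chi\not\subseteq G_{\chi_\ell}$ and $G_{\chi_\ell}\not\subseteq G_\chi$: choosing $f_1\in G_\chi\smallsetminus G_{\chi_\ell}$ and $f_2\in G_{\chi_\ell}\smallsetminus G_\chi$, the product $f=f_1 f_2^{-1}$ satisfies $\chi(f)>0>\chi_\ell(f)$. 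Your quadrant argument is self-contained and avoids quoting that fact, at the cost of a case distinction and a covering-radius estimate; the paper's two-element trick is worth internalising, as it is reused verbatim in Lemmas \ref{lem:Existence-elements-f-and-h-I-half-line} and \ref{lem:Existence-elements-f-and-h-I-line}. Your explicit choice of a single $\delta$ serving both $f$ and $h$, your appeal to Lemma \ref{lem:Irreducibility-PL}(i) for the compressing conjugator, and the reflection $t\mapsto a+c-t$ for case (ii) all merely spell out what the paper leaves implicit (the paper itself invokes the same reflection in the proof of the subsequent lemma).
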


\begin{proof}
Suppose first that condition (i) is satisfied.
The points $[\chi]$ and $[\chi_\ell]$ are then distinct; 
hence so are the submonoids $G_\chi$ and $G_{\chi_\ell}$
and neither of them is contained in the other.
There exist therefore an element, say $f_1$, in $G_\chi \smallsetminus G_{\chi_\ell}$ 
and an element $f_2$ in $G_{\chi_\ell} \smallsetminus G_\chi$,
whence
\[
\chi(f_1) \geq 0, \quad \chi_\ell(f_2)  \geq 0 
\quad \text{and} \quad 
\chi(f_2) < 0, \quad \chi_\ell(f_1) < 0.
\]
The product $f = f_1 \cdot f_2^{-1}$ has therefore the property 
that $\chi(f) > 0 > \chi_\ell(f)$.
The second inequality says that $D_a(f) < 1$; 
so there exists a small interval to the right of $a$, 
say $[a, a +\delta]$, 
on which $f$ is linear with slope $D_a(f) < 1$.

We next use the hypothesis that $\chi$ does not vanish on $\ker \sigma_r$.
It guarantees that $\chi$ is positive on an element $h_0$ 
whose support does not touch the right end point of $I$.
Since $G$ is irreducible, 
a conjugate $h = \act{g_0}h_0$ of $h_0$ will then have the properties
\begin{equation}
\label{eq:Properties-h}
\supp h \subset [a, a+\delta[ \quad \text{and} \quad \chi (h) = \chi (h_0 ) > 0.
\end{equation}
The elements $f$ and $h$ thus enjoy the asserted properties.

Suppose next that condition (ii) holds.
Then $[\chi] \neq [\chi_\ell]$ 
and so one can find, as in the previous case, an element $f \in G$ 
and a positive number $\delta$ so that $f$ is affine on $[c -\delta, c]$ with $D_c(f) < 1$.
The hypothesis that $\chi(\ker \sigma_\ell) \neq \{0\}$ 
allows one, finally,  
to construct an element $h \in  \ker \chi_\ell$ with the stated properties.
\end{proof}

\subsubsection{The third result}
\label{sssec:The-third-result}
The third result is technical, as is the second one;
moreover, it presupposes that the group $G$ be finitely generated.
This additional hypothesis will be removed later on.
\begin{lem}
\label{lem:Sufficient-condition-for-point-in-Sigma1}
Let $G$ be a finitely generated subgroup  of $\PL_o([a,c])$ 
and let $\chi \colon G \to \R$ be a non-zero character.
Assume $G$ contains elements $f$ and $h$ with positive $\chi$-values
that satisfy, for some $\delta> 0$,  
one of the following two conditions:
\begin{enumerate}[(i)]
\item  $f$ is linear on $I_\delta = [a,a +\delta]$ with slope $< 1$,
and $\supp h \subset I_\delta$, or
\item $f$ is affine on $J_\delta = [c -\delta, c]$ with slope $< 1$
and $\supp h \subset J_\delta$.
\end{enumerate}
Then $[\chi] \in \Sigma^1(G)]$.
\end{lem}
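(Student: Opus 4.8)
\emph{Reductions.} The reflection $t\mapsto a+c-t$ of $[a,c]$ is orientation-reversing, and conjugation by it is an automorphism of $\PL_o([a,c])$ that interchanges the two endpoints, hence the homomorphisms $\sigma_\ell$ and $\sigma_r$ and with them the hypotheses (i) and (ii); as $\Sigma^1$ is invariant under isomorphisms, it suffices to treat case (i). Since $G$ is finitely generated, the $\Sigma^1$-criterion applies, so that connectivity of the subgraph $\Gamma(G,\XX)_\chi$ for a \emph{single} finite generating set $\XX$ already yields $[\chi]\in\Sigma^1(G)$. I therefore fix a finite generating set $\XX$ of $G$ containing $f$ and $h$ and aim to show that $\Gamma_\chi=\Gamma(G,\XX)_\chi$ is connected, i.e. that every vertex $g\in G_\chi$ can be joined to $1$ inside $\Gamma_\chi$. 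One move is available for free: because $\chi(f)>0$, right multiplication by $f$ does not decrease $\chi$, so for every $g\in G_\chi$ and every $n\ge 0$ the path $g,gf,\dots,gf^n$ lies in $\Gamma_\chi$ and in particular $1$ is joined to each power $f^n$. It thus suffices to join, for each $g$ and a conveniently large $n$, the raised vertex $gf^n$ to a power of $f$.

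\emph{The engine supplied by the hypotheses.} Since $f$ is linear on $I_\delta=[a,a+\delta]$ with slope $s<1$, each $f^n$ is linear there with slope $s^n$ and carries $I_\delta$ into $[a,a+s^n\delta]$; hence conjugation by positive powers of $f$ drives any support contained in $I_\delta$ towards $a$, while conjugation by negative powers drives it to the right, out of $I_\delta$. As $\supp h\subset I_\delta$ and $\chi(h)>0$, the conjugates $h_n=f^{\,n}hf^{-n}$ are elements of \emph{positive} $\chi$-value whose supports shrink to $a$, whereas for $n$ large the support of $f^{-n}hf^{\,n}$ is disjoint from $\supp h$, so that $h$ commutes with $f^{-n}hf^{\,n}$. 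These localized positive bumps near $a$, together with the resulting commutation relations, are the analogue — read off here directly from the dynamics near the endpoint — of the commutator relations that the independence hypothesis furnishes in the proof of Theorem~8.1 of \cite{BNS}.

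\emph{Connecting the raised vertices.} The plan is to reroute a spelling of $g$ so that every prefix retains a non-negative $\chi$-value. Each descending step of the word is lifted by prepending a high power of $f$ (cheap, since $\chi(f)>0$); the offending letters are transported by $f$-conjugation into the region close to $a$, where the commuting copies $h_n$ provide the cancellation room needed to absorb the discrepancy without the running $\chi$-value ever returning to $0$; one then descends again by the inverse power of $f$. Because the total descent prescribed by a fixed word is bounded while the upward excursion may be taken as large as one likes, every prefix can be kept in $G_\chi$, and $gf^n$ is joined to a power of $f$; combined with the free vertical moves this establishes the connectivity of $\Gamma_\chi$.

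\emph{Main obstacle.} The serious point is to turn the preceding paragraph into an argument meeting the finiteness required by the $\Sigma^1$-criterion: one must select from the infinite family $\{h_n\}$ a finite subfamily whose commutation relations already suffice, and produce a \emph{uniform} height of excursion valid for all generators simultaneously. I expect the finiteness to come from the fact that only the finitely many elements of $\XX$ have to be rerouted and that $f$ is linear on the \emph{fixed} interval $I_\delta$, and the uniformity from this same linearity; the delicate case is that of an element whose support accumulates at $a$, where the interval on which $f$ is linear must be weighed against the possibly shorter interval of linearity of the element being transported.
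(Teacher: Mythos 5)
Your first two paragraphs are on target: the reduction of (ii) to (i) by reflection and the observation that the conjugates $f^m h f^{-m}$ are elements of positive $\chi$-value whose supports can be pushed arbitrarily close to $a$ are exactly the ingredients the paper uses. But the proof stops there. The third paragraph ("reroute a spelling of $g$ so that every prefix retains a non-negative $\chi$-value\dots the commuting copies $h_n$ provide the cancellation room") is a plan, not an argument — no rerouting is actually constructed — and your fourth paragraph concedes that the "serious point" (which finite subfamily of the $h_n$ suffices, and why a uniform excursion height exists) is still open. That serious point is precisely the content of the lemma, so as it stands there is a genuine gap.

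The missing step is a single explicit identity. Pick a finite generating set $\XX$ containing $f$ and $h$, set $\YY_+=\{y\in\XX\cup\XX^{-1}\mid\chi(y)\ge 0\}$, and fix $t\in\YY_+$ with $\chi(t)>0$. Every commutator $[t^{-1},y^{-1}]$ lies in $\ker\sigma_\ell$ and hence is the identity on some neighbourhood $[a,a+\delta_1]$ of $a$; since $\YY_+$ is finite, one $\delta_1>0$ works for all $y$ simultaneously — this is where the uniformity you were worried about comes from, and it is why the "delicate case" of a generator whose support accumulates at $a$ never arises: the generators are never transported, only $h$ is. Choosing $m$ with $f^m(a+\delta)\le a+\delta_1$ and setting $H=f^mhf^{-m}$, the element $H$ commutes with every $[t^{-1},y^{-1}]$, and the identity $1=[t^{-1},y^{-1}]\cdot H\cdot[y^{-1},t^{-1}]\cdot H^{-1}$ rewrites as a relation $t^{-1}yt=w_y$ with $w_y=y\cdot f^mhf^{-m}\cdot y^{-1}t^{-1}yt\cdot f^mh^{-1}f^{-m}$, whose minimum of $\chi$ over prefixes is $\min\{\chi(h)-\chi(t),0\}>-\chi(t)$. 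This finite set of relations is exactly what the $\Sigma^1$-criterion for finitely generated groups asks for, and it replaces both your "rerouting" paragraph and the selection of a finite subfamily of the $h_n$ (only the single conjugate $H$ is needed). Without this, or an equivalent explicit mechanism, the proposal does not establish the lemma.
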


\begin{proof}
We assume first that condition (i) holds
and aim at verifying the hypotheses of the $\Sigma^1$-criterion
\footnote{see \cite[Section I.3]{BiSt92} or \cite[Section A3]{Str13a}}
for a suitable, finite generating set $\XX$ of $G$.
Choose a finite generating set $\XX$ of $G$ 
that includes the elements $f$ and $h$ provided by the assumptions of the lemma.
Set $\YY = \XX \cup \XX^{-1}$ and $\YY_+ = \{y \in \YY \mid \chi(y) \geq 0 \}$.
Then $\YY_{+}$ is a finite generating set of $G$;
it contains an element $t$ with $\chi(t) > 0$ (for instance $f$ or $h$).
None of the commutators $[t^{-1}, y^{-1}]$ with $y \in \YY_+$ has a support 
that touches the left end point $a$ of $I$;
so there exists a small real number $\delta_1$ 
such that $\supp ([t^{-1},y^{-1}]) \subset \, ]a+ \delta_1, c[$ for each $y \in \YY_+$.
The real $\delta_1$ can be smaller than $\delta$,
but there exists a positive integer $m$ so that $ f^m(a +\delta) \leq a +\delta_1$.
Set $H = f^m \cdot h \cdot f^{-m}$.
Then the support of $H$ lies in the interval $[a, a+\delta_1]$
and is thus disjoint from the support of every commutator $[t^{-1}, y^{-1}]$ with $y \in \YY_+$;
so $H$ commutes with each of these commutators.
For every $y \in \YY_+$ the relation 
\[
1 = [t^{-1}, y^{-1}] \cdot H \cdot [y^{-1}, t^{-1}]  \cdot H^{-1}
=
t^{-1} y^{-1} t \cdot \left(y \cdot H \cdot [y^{-1}, t^{-1}] \cdot H^{-1} \right)
\]
is therefore valid.
Since $H$ is short for the the word $f^m \cdot h \cdot f^{-m}$ 
the previous relation is equivalent to the relation
\begin{equation}
\label{eq:Condition-Sigma1-criterion}
t^{-1} y t = w_y 
\quad \text{with}\quad 
w_y = y \cdot f^m h f^{-m} \cdot y^{-1} t^{-1} y t \cdot f^m h^{-1} f^{-m}.
\end{equation}
The minimum of $\chi$ on the initial segments of the word $t^{-1} y t$ is 
$\chi(t^{-1}) = - \chi(t)$;
the minimum of $\chi$ on the initial segments of the word $w_y$ is $\min\{-\chi(t) + \chi(h), 0\}$.
As the second minimum is larger than the first one (for every choice of $y \in \YY_+$),
the hypotheses of the $\Sigma^1$-criterion
are satisfied and so $\chi$ represents a point of $\Sigma^1(G)$.

So far we know that hypothesis (i) implies $[\chi] \in \Sigma^1(G)$.
One can show similarly that $[\chi] \in \Sigma^1(G)$ if condition (ii) is satisfied.
Alternatively, one can use the fact 
that conjugation by the reflection in the midpoint $(a+c)/2$  of $I$ 
induces an automorphism $\alpha $ of $\PL_o(I)$ 
that sends $G$ onto the subgroup $\alpha(G)$
and that $\chi_r = \chi_\ell \circ \alpha$. 
\end{proof}

\begin{remark}
\label{remark:lem-Sufficient-condition-for-point-in-Sigma1(technical)}
The proof of the above lemma is an adaption of that of Theorem 8.1 in \cite{BNS}
to the Cayley-graph definition of $\Sigma^1$.
\end{remark}

\section{Proof of Theorem \ref{thm:Generalization-of-BNS-Theorem-8.1}}
\label{sec:Proof-first-main-theorem}
%
We begin with a further auxiliary result.
\begin{lem}
\label{lem:G-non-abelian}
Let $G$ be an irreducible subgroup of $\PL_o(I)$.
If $G/(\ker \sigma_\ell \cdot \ker \sigma_r)$ is a torsion group  $G$ is not abelian.
\end{lem}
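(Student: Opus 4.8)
The plan is to show, under the two hypotheses, that $G$ contains two elements that fail to commute; the argument splits according to whether the kernels of $\sigma_\ell$ and $\sigma_r$ are trivial. The torsion hypothesis will be used only to eliminate a degenerate case, while irreducibility does the real work through Lemma \ref{lem:Irreducibility-PL}(i).

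First I would dispose of the possibility that $\ker\sigma_\ell$ and $\ker\sigma_r$ are both trivial. In that case $\ker\sigma_\ell\cdot\ker\sigma_r=\{1\}$, so the quotient $G/(\ker\sigma_\ell\cdot\ker\sigma_r)$ is $G$ itself, and the torsion hypothesis forces $G$ to be a torsion group. But if $g\in\PL_o(\R)$ is orientation preserving with $g\neq\id$, then $g(x)\neq x$ for some $x$, and monotonicity gives $g^n(x)\neq x$ for all $n\geq 1$; hence $\PL_o(\R)$ is torsion-free and $G$ would be trivial. This contradicts irreducibility, since the interior of $I$ is nonempty and the trivial group fixes its interior points. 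Thus at least one of $\ker\sigma_\ell$, $\ker\sigma_r$ is nontrivial.

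The heart of the proof treats the case $\ker\sigma_\ell\neq\{1\}$. I would pick a nontrivial $g\in\ker\sigma_\ell$. Since $\sigma_\ell(g)=D_a(g)=1$ and $g$ is affine near $a$, the element $g$ is the identity on a neighbourhood of $a$, so $p:=\inf\supp g>a$ and $\supp g\subseteq\,]p,c]$. Because $\supp g$ is a nonempty finite union of open intervals, I can choose a point $x_0\in\supp g$ with $p<x_0<c$. Now irreducibility enters: choosing any $t$ with $a<t<p$ and applying Lemma \ref{lem:Irreducibility-PL}(i) to the pair $t<x_0$ yields an $f\in G$ with $f(x_0)<t<p$. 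Since $\supp(fgf^{-1})=f(\supp g)$, and $f(x_0)<p$ lies outside $\supp g\subseteq\,]p,c]$ while $x_0\in\supp g$, the homeomorphism $f$ does not preserve $\supp g$; hence $fgf^{-1}\neq g$ and $G$ is non-abelian.

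The remaining case $\ker\sigma_r\neq\{1\}$ is handled symmetrically: for a nontrivial $g\in\ker\sigma_r$ one has $q:=\sup\supp g<c$, and one pushes a support point $x_0<c$ of $g$ to the right of $q$. This uses the analogue of Lemma \ref{lem:Irreducibility-PL}(i) for the supremum of a $G$-orbit (the supremum is a fixed point, hence equals $c$), or equivalently one conjugates by the reflection in the midpoint $(a+c)/2$, which interchanges $\sigma_\ell$ and $\sigma_r$ exactly as in the proof of Lemma \ref{lem:Sufficient-condition-for-point-in-Sigma1}. The step I expect to require the most care is this conjugation argument: one must verify that a genuine support point can be pushed strictly past the support endpoint $p$ (respectively $q$), which is precisely where the finite-union-of-open-intervals structure of $\supp g$ and the full strength of irreducibility are needed.
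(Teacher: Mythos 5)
Your proof is correct and follows essentially the same route as the paper: use the torsion hypothesis together with the torsion-freeness of $\PL_o(\R)$ and the non-triviality forced by irreducibility to produce a non-identity element of $\ker\sigma_\ell\cup\ker\sigma_r$, then use irreducibility to conjugate that element to one with a different support. You merely make explicit (via Lemma \ref{lem:Irreducibility-PL}(i)) the conjugation step that the paper states in one line.
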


\begin{proof}
Suppose $G$ contains a non-trivial element  $g_0$ 
that is the identity near one of the endpoints.
Since $G$ is irreducible, 
$g_0$ has then a conjugate $\act{f} g_0$ 
whose support is distinct from that of $g_0$,
and so $f$ and $g_0$ do not commute.
It suffices therefore to find a non-identity element $g_0$  in $\ker \sigma_\ell\, \cup \, \ker \sigma_r$.
This is easy: 
by assumption, every element of $G/(\ker \sigma_\ell \cdot \ker \sigma_r)$ has finite order,
while $G$ itself is torsion free and, being irreducible,  not reduced to the identity. 
So  $\ker \sigma_\ell \cdot \ker \sigma_r\neq \{1\}$.
\end{proof}
\begin{remark}
\label{remark:Non-abelian-reduced}
Theorem \ref{thm:Generalization-of-BNS-Theorem-8.1},
but also the preliminary result Proposition \ref{prp:Theorem8.1-of-BNS},
rely on Proposition \ref{prp:Roles-of-chi-ell-and-chi-r}.
This proposition assumes that $G$ be non-abelian.
As the preceding lemma shows this assumption is fulfilled 
whenever  $G/(\ker \sigma_\ell \cdot \ker \sigma_r)$ is a torsion group,
a requirement that is is imposed in our main results for another reason.

The conclusion of the lemma holds actually under a far weaker hypothesis:
it suffices that the group $G$ be irreducible, but not cyclic;
see footnote on page \pageref{footnote:Non-abelian}.
\end{remark}

\subsection{Theorem \ref{thm:Generalization-of-BNS-Theorem-8.1} 
for finitely generated groups } 
\label{ssec:First-main-result-for-fg-groups}
%
The following proposition is a slight generalization of Theorem 8.1 in \cite{BNS};
its proof is based on the three results established in section \ref{sssec:Basic-results-irreducible-groups}
and on  Lemma \ref{lem:G-non-abelian}:
\begin{prp}
\label{prp:Theorem8.1-of-BNS}
Let $G$ be a finitely generated irreducible subgroup of $\PL_o(I)$.
If the quotient group $G/(\ker \sigma_\ell \cdot \ker \sigma_r)$ is finite
then $\Sigma^1(G) = S(G) \smallsetminus \{[\chi_\ell], [\chi_r]\}$.
\end{prp}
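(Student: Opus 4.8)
The plan is to establish the asserted equality by proving the two inclusions separately, relying on the three results collected in Section \ref{sssec:Basic-results-irreducible-groups} together with Lemma \ref{lem:G-non-abelian}. Before doing so I would record two preliminary facts that hold under the hypotheses. First, since $G$ itself is a finitely generated irreducible subgroup of $G$, Lemma \ref{lem:Irreducibility-PL}(ii) guarantees that $\sigma_\ell$ and $\sigma_r$ are non-trivial; hence $\chi_\ell = \ln \circ\, \sigma_\ell$ and $\chi_r = \ln \circ\, \sigma_r$ are non-zero characters and $[\chi_\ell], [\chi_r]$ are genuine points of $S(G)$. Second, as $G/(\ker \sigma_\ell \cdot \ker \sigma_r)$ is finite, hence torsion, Lemma \ref{lem:G-non-abelian} shows that $G$ is non-abelian.

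For the inclusion $\Sigma^1(G) \subseteq S(G) \smallsetminus \{[\chi_\ell], [\chi_r]\}$ I would simply invoke Proposition \ref{prp:Roles-of-chi-ell-and-chi-r}: the group $G$ is non-abelian and irreducible and both $\chi_\ell$ and $\chi_r$ are non-zero, so each of $[\chi_\ell]$ and $[\chi_r]$ lies in the complement of $\Sigma^1(G)$. This removes exactly the two points in question.

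For the reverse inclusion I would take an arbitrary $[\chi] \in S(G)$ with $[\chi] \neq [\chi_\ell]$ and $[\chi] \neq [\chi_r]$ and show $[\chi] \in \Sigma^1(G)$ by producing the elements required in Lemma \ref{lem:Sufficient-condition-for-point-in-Sigma1}. The crux is a dichotomy: I claim that $\chi(\ker \sigma_\ell) \neq \{0\}$ or $\chi(\ker \sigma_r) \neq \{0\}$. Indeed, if both vanished, then $\chi$ would vanish on the subgroup $\ker \sigma_\ell \cdot \ker \sigma_r$ and so factor through the finite quotient $G/(\ker \sigma_\ell \cdot \ker \sigma_r)$; since $\R$ is torsion-free this forces $\chi = 0$, contradicting $[\chi] \in S(G)$. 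Here the finiteness (equivalently torsion) hypothesis enters decisively. Given the dichotomy, either $[\chi] \neq [\chi_\ell]$ together with $\chi(\ker \sigma_r) \neq \{0\}$ places $\chi$ in case (i) of Lemma \ref{lem:Existence-elements-f-and-h}, or $[\chi] \neq [\chi_r]$ together with $\chi(\ker \sigma_\ell) \neq \{0\}$ places $\chi$ in case (ii). In either case that lemma furnishes a $\delta > 0$ and elements $f, h$ of positive $\chi$-value satisfying \eqref{eq:Consequence-i} or \eqref{eq:Consequence-ii}, which are precisely conditions (i) and (ii) of Lemma \ref{lem:Sufficient-condition-for-point-in-Sigma1}; applying that lemma yields $[\chi] \in \Sigma^1(G)$, completing the reverse inclusion and hence the proof.

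The main obstacle is not any single computation---those are dispatched by the preparatory lemmas---but rather the need to verify the dichotomy and to check that the two distinctness hypotheses $[\chi] \neq [\chi_\ell]$ and $[\chi] \neq [\chi_r]$ match up correctly with the two sides of that dichotomy, so that the hypotheses of Lemma \ref{lem:Existence-elements-f-and-h} are genuinely met. I expect the only delicate point to be confirming that the finiteness of the quotient (rather than mere non-triviality of $\sigma_\ell$ and $\sigma_r$) is exactly what is needed to exclude the degenerate possibility that $\chi$ kills both kernels simultaneously.
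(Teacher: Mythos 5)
Your proposal is correct and follows essentially the same route as the paper's proof: non-abelianness via Lemma \ref{lem:G-non-abelian}, non-triviality of $\sigma_\ell,\sigma_r$ via Lemma \ref{lem:Irreducibility-PL}(ii), exclusion of $[\chi_\ell],[\chi_r]$ via Proposition \ref{prp:Roles-of-chi-ell-and-chi-r}, and the dichotomy from finiteness of the quotient feeding Lemma \ref{lem:Existence-elements-f-and-h} into Lemma \ref{lem:Sufficient-condition-for-point-in-Sigma1}. The only difference is that you spell out the torsion-freeness argument behind the dichotomy, which the paper leaves implicit.
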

\begin{proof}
The group $G$ is non-abelian by Lemma \ref{lem:G-non-abelian},
the characters $\chi_\ell$ and $\chi_r$ are non-zero by part (ii) of 
Lemma \ref{lem:Irreducibility-PL},
and so $[\chi_\ell]$ and $[\chi_r]$ lie in $\Sigma^1(G)^c$
by Proposition \ref{prp:Roles-of-chi-ell-and-chi-r}.
Consider now a non-zero character $\chi \colon G \to \R$ 
that represents neither $[\chi_\ell]$ nor $[\chi_r]$.
Since the quotient group $G/(\ker \sigma_\ell \cdot  \ker \sigma_r)$ is finite,
the homomorphism $\chi$ is not trivial, 
either on $\ker \sigma_r$ or on $\ker \sigma_\ell$.
In the first case 
condition (i) in Lemma \ref{lem:Existence-elements-f-and-h} applies 
and so the lemma provides us with elements $f$, $h$ in $G$ and a positive real $\delta$,
all in such a way 
that $\chi$ is positive on $f$ and on $h$, 
and that condition \eqref{eq:Consequence-i} is fulfilled.
As $G$ is assumed to be finitely generated,
the hypotheses of Lemma \ref{lem:Sufficient-condition-for-point-in-Sigma1} are satisfied 
and so $[\chi] \in \Sigma^1(G)$ by that lemma.

If, on the other hand, $\chi$ is non-zero on $\ker \sigma_\ell$,
hypothesis (ii) of Lemma \ref{lem:Existence-elements-f-and-h} holds 
and so it follows, much as before, that $[\chi] \in \Sigma^1(G)$.
\end{proof}
%
\subsection{Relation with Theorem 8.1 in \cite{BNS}} 
\label{ssec:Relation-with-Theorem8.1}
%
Theorem 8.1 is slightly weaker than Proposition \ref{prp:Theorem8.1-of-BNS} 
in  that, there, $G$ is required to coincide with the product $\ker \sigma_\ell \cdot \ker \sigma_r$.
More important, though,  
is the fact
that the hypothesis that $G$ coincide with  $\ker \sigma_\ell \cdot \ker \sigma_r$ 
is stated differently in \cite{BNS}:
there one requires that the homomorphisms $\sigma_\ell$ and $\sigma_r$ be \emph{independent},
in the sense  that 
\[
\im \sigma_\ell = \sigma_\ell (\ker \sigma_r)
\quad \text{and} \quad  
\im \sigma_r = \sigma_r(\ker \sigma_\ell).
\]
The equivalence of the two conditions is a consequence of
\begin{lem}
\label{lem:Equivalence-conditions}
Let $\eta_1 \colon G  \to H_1$ and $\eta_2 \colon G \to H_2$ be homomorphisms.
Then the following statements imply each other:
\[ \text{(i) }\im \eta_1 = \eta_1(\ker \eta_2), \quad
(\text{ii) }\im \eta_2 = \eta_2(\ker \eta_1), \quad
\text{(iii) }G = \ker \eta_1 \cdot \ker \eta_2.
\]
\end{lem}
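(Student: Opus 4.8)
The plan is to establish the chain of equivalences by proving (i) $\Leftrightarrow$ (iii) directly and then obtaining (ii) $\Leftrightarrow$ (iii) by symmetry. The key preliminary observation is that statement (iii) is symmetric in the two homomorphisms: since $\ker\eta_1$ and $\ker\eta_2$ are subgroups, the inversion map $x\mapsto x^{-1}$ carries the set $\ker\eta_1\cdot\ker\eta_2$ bijectively onto $\ker\eta_2\cdot\ker\eta_1$, so one of these products equals $G$ precisely when the other does. Consequently, once (i) $\Leftrightarrow$ (iii) has been proved, interchanging the roles of the indices $1$ and $2$ yields (ii) $\Leftrightarrow$ (iii), and all three statements are seen to be equivalent.

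For the implication (iii) $\Rightarrow$ (i), I would first note that the inclusion $\eta_1(\ker\eta_2)\subseteq\im\eta_1$ is automatic, so only the reverse inclusion needs attention. Given $g\in G$, write $g=k_1k_2$ with $k_i\in\ker\eta_i$ using (iii); then $\eta_1(g)=\eta_1(k_1)\eta_1(k_2)=\eta_1(k_2)$ because $k_1\in\ker\eta_1$, which exhibits $\eta_1(g)$ as an element of $\eta_1(\ker\eta_2)$ and proves $\im\eta_1\subseteq\eta_1(\ker\eta_2)$. For the converse (i) $\Rightarrow$ (iii), I would take an arbitrary $g\in G$; by (i) its image $\eta_1(g)$ lies in $\eta_1(\ker\eta_2)$, so there is some $k_2\in\ker\eta_2$ with $\eta_1(g)=\eta_1(k_2)$. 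Then $gk_2^{-1}$ lies in $\ker\eta_1$, and the factorisation $g=(gk_2^{-1})\cdot k_2$ displays $g$ as a product of an element of $\ker\eta_1$ with an element of $\ker\eta_2$, whence $G=\ker\eta_1\cdot\ker\eta_2$, which is (iii).

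The argument is a routine element chase and presents no serious obstacle; the only point requiring a moment's care is the symmetry reduction, where one must remember that a product of two subgroups need not be symmetric a priori, so that the equality $\ker\eta_1\cdot\ker\eta_2=\ker\eta_2\cdot\ker\eta_1$ has to be justified via inversion rather than taken for granted. Everything else uses only that $\eta_1$ and $\eta_2$ are group homomorphisms, so no hypothesis on $G$, $H_1$ or $H_2$ enters.
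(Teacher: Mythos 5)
Your proof is correct and is essentially the same element chase as the paper's: both directions rest on writing $g = h_1 h_2$ with $h_i \in \ker\eta_i$ and on the observation that $\eta_1(g)=\eta_1(h_2)$. The only cosmetic difference is that you derive the equivalence involving (ii) from an explicit inversion symmetry of $\ker\eta_1\cdot\ker\eta_2$, where the paper simply repeats the argument with the indices interchanged.
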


\begin{proof}
Assume first that (i) holds.
For every $g \in G$ there exists 
then $h_2 \in \ker \eta_2$ such that $\eta_1(g) = \eta_1(h_2)$.
The quotient $h_1 = g \cdot h_2^{-1}$ is then in $\ker \eta_1$ 
and so $g \in \ker \eta_1 \cdot \ker \eta_2$. 
So statement (iii) is valid.
Implication (ii) $\Rightarrow$ (iii) can be proved similarly.
Conversely,
assume that statement (iii) holds.
Every  $g \in G$ is then a product $g = h_1 \cdot h_2$ with $h_i \in \ker \eta_i$,
whence
\[
\eta_1(g) = \eta_1(h_1 \cdot h_2) = \eta_1(h_2) \in \eta_1(\ker \eta_2);
\]
similarly one sees that  $\eta_2(g) = \eta_2( h_1) \in \eta_2 (\ker \eta_1)$.
So (i) and (ii) are both valid.
\end{proof}
%
\subsection{Proof of Theorem \ref{thm:Generalization-of-BNS-Theorem-8.1} } 
\label{ssec:Proof-of-first-main-result}
%
The proof relies on Lemmata \ref{lem:Existence-elements-f-and-h} and \ref{lem:G-non-abelian},
and on Propositions 
\ref{prp:Membership-for-infinitey-generated-groups},
\ref{prp:Roles-of-chi-ell-and-chi-r}
and \ref{prp:Theorem8.1-of-BNS}.
It consists of two parts.
\subsubsection{Inclusion $E(G) \subseteq \Sigma^1(G)^c$} 
\label{sssec:Part-Exceptional-set}
By hypothesis, 
the group $G$ is irreducible and $G/(\ker \chi_\ell \cdot \ker \chi_r)$ is a torsion group.
By Lemma \ref{lem:G-non-abelian} it is therefore non-abelian.
Assume now that $\sigma_\ell$ is non-trivial.
Then Proposition \ref{prp:Roles-of-chi-ell-and-chi-r}
applies and shows that $[\chi_\ell] \notin \Sigma^1(G)^c$.
If $\sigma_r$ is non-trivial,
one finds similarly that $[\chi_r] \notin \Sigma^1(G)^c$.
%
\subsubsection{Inclusion $S(G) \smallsetminus E(G) \subseteq \Sigma^1(G)$} 
\label{sssec:Part-Non-exceptional-set}
Let $\chi \colon G \to \R$ be a non-zero character 
that is neither a positive multiple of $\chi_\ell$
nor a positive multiple of $\chi_r$.
As the image of $\chi$ is an infinite, torsion-free group  
while $G/(\ker \chi_\ell \cdot \ker \chi_r)$ is a torsion group,
$\chi$ does nor vanish on $\ker \chi_\ell \cup \ker \chi_r$. 
Two cases arise.

Assume first that $\chi(\ker \sigma_r) \neq \{0\}$.
Then the hypothesis of case (i) in Lemma \ref{lem:Existence-elements-f-and-h} is fulfilled,
and so this lemma provides one with elements $f$, $h$ and a positive real $\delta < c-a$
with these properties:
\begin{align}
\chi(f) > 0,& \quad f \text{ is linear on } [a,a+ \delta] \text{ with slope } < 1,
\label{eq:Property-f-I-compact}\\
\chi(h) > 0,& \quad \supp h \subset [a, a+\delta].
\label{eq:Property-h-I-compact}
\end{align}
Consider now a finite subset $\XX_1$ of $G_\chi$ that contains both $f$ and $h$
and let $G_1$ denote the subgroup generated by $\XX_1$.
For every commutator $c = [h_1, h_2]$ with $h_i \in \XX \cup \XX^{-1}$ 
there exists then a positive integer $m$ 
so that the supports of $\act{f^m}h$ and $c$ are disjoints,
and so it follows, 
as in the proof of Lemma \ref{lem:Sufficient-condition-for-point-in-Sigma1},
that $\chi \restriction{G_1}$ represents a point of $\Sigma^1(G_1)$.
The group $G$ itself  is the union of such finitely generated subgroups $G_1$;
as $\chi$ does not vanish on the intersection of two such subgroups
(for both contain the elements $f$ and $h$)
Proposition \ref{prp:Membership-for-infinitey-generated-groups}
allows us to conclude
that  $[\chi] \in \Sigma^1(G)$.

Secondly,
assume that $\chi(\ker \sigma_\ell) \neq \{0\}$.
Then the hypothesis of case (ii) in Lemma \ref{lem:Existence-elements-f-and-h} is fulfilled.
This lemma provides one with elements $f$, $h$ and a positive real $\delta < c-a$,
all in such a way that
\begin{align}
\chi(f) > 0,& \quad f \text{ affine on } [c-\delta, c] \text{ with slope } < 1,
\label{eq:Property-f-I-compact-2}\\
\chi(h) > 0,& \quad \supp h \subset [c-\delta, c].
\label{eq:Property-h-I-compact-2}
\end{align}
With the help of $f$ and $h$ one proves then, as in the preceding paragraph, 
that  $[\chi] \in \Sigma^1(G)$.
%
\subsection{Addendum to Theorem \ref{thm:Generalization-of-BNS-Theorem-8.1}} 
\label{ssec:Addendum}
%
A crucial feature of Theorem \ref{thm:Generalization-of-BNS-Theorem-8.1}  is the hypothesis 
that $G/(\ker \chi_\ell \cdot \ker \chi_r)$ be a torsion group.
In the proof of the theorem this assumption,
along with the irreducibility of $G$, is used to verify the assumptions of Lemma
\ref{lem:Existence-elements-f-and-h}.
I do not know to what extent the conclusion of the theorem continues to be valid 
without this assumption.
The techniques used in the proof of the theorem allow one, however, to establish an addendum 
where both assumptions and conclusion are weaker 
than in Theorem \ref{thm:Generalization-of-BNS-Theorem-8.1}.

Suppose $G$ is an irreducible group and $\chi \colon G\to \R$ is a homomorphism 
that is non-zero, both on $\ker \chi_\ell$ and on $\ker \chi_r$.
Then $\chi$ satisfies assumption (i) of Lemma \ref{lem:Existence-elements-f-and-h}
(and also assumption (ii)),
and so it follows by the reasoning given in section \ref{sssec:Part-Non-exceptional-set}
that $\chi$ represents a point of $\Sigma^1(G)$.

In view of Definition \eqref{eq:Definition-S(G,K)} the conclusion just obtained 
can be rephrased as follows:
\begin{addendum}
\label{prp:Addendum-Theorem1.1}
Let $G$ be a subgroup of $\PL_o(I)$.
If $G$ is irreducible then 
\begin{equation}
\label{eq:Addendum}
S(G, \ker \chi_\ell)^c \cap S(G, \ker \chi_r)^c \subseteq \Sigma^1(G).
\end{equation}
\end{addendum}

\section{Discussion of the hypotheses of Theorem \ref{thm:Generalization-of-BNS-Theorem-8.1}}
\label{sec:Discussion-hypotheses}
%
Theorem \ref{thm:Generalization-of-BNS-Theorem-8.1}
imposes three crucial assumptions on the subgroup $G$ of $\PL_o(\R)$:
it is to be \emph{irreducible}, 
its quotient group $G/(\ker \sigma_\ell \cdot \ker \sigma_r)$ is to be a \emph{torsion group},
and the supports of the elements of $G$ are to lie in a compact interval $I$.
In this section, I discuss what can happen 
if the first or the second hypothesis is omitted.
%
\subsection{Reducible groups} 
\label{ssec:Reducible-groups}
Let $I_1$ and $I_2$ be disjoints intervals of the compact interval $I$ 
and let $G_1$ and $G_2$ be subgroups of $\PL_o(I_1)$ and $\PL_o(I_2)$,
respectively.
Then $G_1$ and $G_2$ generate their direct product in $PL_o(I)$.
Let $\pi_1 \colon G \epi G_1$ and $\pi_2 \colon G \epi G_2$ be the canonical projections
and, for  $i \in \{1,2\}$,
let 
\[
\pi_i^* \colon S(G_i) \mono S(G), \quad [\bar{\chi}] \mapsto [\bar{\chi} \circ \pi_i]
\]
be the induced embedding of spheres.
Then
\begin{equation}
\label{eq:Sigma1-product}
\Sigma^1(G_1 \times G_2)^c = \pi_1^* (\Sigma^1(G_1)^c) \cup  \pi_2^* (\Sigma^1(G_2)^c).
\end{equation}
For finitely generated groups $G_i$, 
this formula goes back to \cite[Theorem 7.4]{BNS};
the general case is covered by Proposition II.4.7 in \cite{BiSt92}
and  by \cite[Proposition C2.55]{Str13a}.
In the sequel,
the following generalization will be used:
\begin{crl}
\label{crl:Infinite-product}
Assume $G$ is the direct product of a family of groups $\{G_i \mid i \in I \}$.
For each index  $i \in I$, 
let $\pi_i \colon G \epi G_i$ denote the canonical projection onto the $i$-th factor $G_i$
and let $\pi_i^* \colon S(G_i) \mono S(G)$ denote the corresponding embedding of spheres.
Then
\begin{equation}
\label{eq:Sigma1-product-2}
\Sigma^1(G)^c = \bigcup\nolimits_{i\in I} \pi_i^* (\Sigma^1(G_i)^c).
\end{equation}
\end{crl}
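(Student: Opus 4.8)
The plan is to reduce the general case to the two-factor formula \eqref{eq:Sigma1-product} together with a passage to the limit handled by the two membership/non-membership criteria already at our disposal, namely Propositions \ref{prp:Membership-for-infinitey-generated-groups} and \ref{prp:Non-membership-for-arbitrary-groups}. I would prove the two inclusions separately.

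For the inclusion $\bigcup_{i \in I} \pi_i^*(\Sigma^1(G_i)^c) \subseteq \Sigma^1(G)^c$, I would argue one factor at a time. Fix $i_0 \in I$ and a character $\bar\chi \colon G_{i_0} \to \R$ with $[\bar\chi] \in \Sigma^1(G_{i_0})^c$, and set $\chi = \bar\chi \circ \pi_{i_0}$. The point is that $\chi$ vanishes on every other factor $G_i$, $i \neq i_0$, since these lie in the kernel of $\pi_{i_0}$. I would transport a disconnecting submonoid of $(G_{i_0})_{\bar\chi}$ up to $G$: if $M_0 \subsetneq (G_{i_0})_{\bar\chi}$ witnesses $[\bar\chi] \notin \Sigma^1(G_{i_0})$ via Proposition \ref{prp:Non-membership-for-arbitrary-groups}, then setting $M = M_0 \times \prod_{i \neq i_0} G_i$ yields a submonoid of $G_\chi$ satisfying the three properties \eqref{eq:Measured-submonoid} and with $M \varsubsetneqq G_\chi$, because the extra factors sit entirely inside $\ker \chi$ and so interfere with none of the three defining equalities. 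Proposition \ref{prp:Non-membership-for-arbitrary-groups} then gives $[\chi] \in \Sigma^1(G)^c$. (Strictly, one should verify that $[\bar\chi] \notin \Sigma^1(G_{i_0})$ is witnessed by such a submonoid; Remark \ref{remarks:Comments-on-proof-}b notes that membership in $\Sigma^1$ is equivalently characterized by these submonoids, which justifies the construction.)

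For the reverse inclusion $\Sigma^1(G)^c \subseteq \bigcup_{i\in I} \pi_i^*(\Sigma^1(G_i)^c)$, I would prove the contrapositive. Suppose $[\chi] \in S(G)$ has $[\chi \restriction G_i] \in \Sigma^1(G_i)$ (interpreting the empty condition as trivially satisfied) for every $i$; I must show $[\chi] \in \Sigma^1(G)$. Here I would exhibit $G$ as a directed union of the finite subproducts $G_F = \prod_{i \in F} G_i$ over finite $F \subseteq I$, apply the two-factor formula \eqref{eq:Sigma1-product} inductively to get $[\chi \restriction G_F] \in \Sigma^1(G_F)$ for each finite $F$ large enough to carry $\chi$ nontrivially, and then invoke Proposition \ref{prp:Membership-for-infinitey-generated-groups} to assemble these into membership of $[\chi]$ in $\Sigma^1(G)$. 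The connectedness hypothesis (ii) of that proposition needs care: the graph $\GG(\chi)$ on the approximating subgroups must be connected, which requires that the finite subproducts pairwise intersect in a subgroup on which $\chi$ is nonzero. Choosing the $G_F$ to form a directed (indeed nested cofinal) family ensures that any two overlap in a common $G_{F'}$ carrying $\chi$ nontrivially, so $\GG(\chi)$ is connected.

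The main obstacle I anticipate is the reverse inclusion, specifically making Proposition \ref{prp:Membership-for-infinitey-generated-groups} apply cleanly: its factors $G_j$ must be \emph{finitely generated}, whereas the subproducts $G_F$ need not be. The correct remedy is to take the approximating family to consist not of the subproducts $G_F$ themselves but of finitely generated subgroups of them, chosen so that $\chi$ restricts nontrivially to each and so that the overlap graph stays connected; their images in each factor are finitely generated, and the finite-product formula \eqref{eq:Sigma1-product} applied to $\Sigma^1$ of these finitely generated subgroups supplies hypothesis (i). Coordinating the finite generation with the nonvanishing of $\chi$ on pairwise intersections is the delicate bookkeeping step, but it is exactly the situation Proposition \ref{prp:Membership-for-infinitey-generated-groups} was designed to handle.
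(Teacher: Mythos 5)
Your first inclusion is correct but takes a longer route than necessary: the containment $\pi_{i_0}^*(\Sigma^1(G_{i_0})^c)\subseteq\Sigma^1(G)^c$ holds for \emph{any} epimorphism $\pi_{i_0}$, as the paper already recalls in the introduction (with reference to \cite[Corollary B1.8]{Str13a}), so no submonoid needs to be transported; note also that your construction silently uses the \emph{converse} of Proposition \ref{prp:Non-membership-for-arbitrary-groups} (that every point of $\Sigma^1(G_{i_0})^c$ is witnessed by such a submonoid), which this paper only alludes to in Remark \ref{remarks:Comments-on-proof-}b and never proves.

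The genuine gap lies in the reverse inclusion, precisely at the point you flag as ``delicate bookkeeping''. Your remedy --- replace the finite subproducts $G_F$ by finitely generated subgroups and feed formula \eqref{eq:Sigma1-product} into hypothesis (i) of Proposition \ref{prp:Membership-for-infinitey-generated-groups} --- does not go through. First, a finitely generated subgroup $H$ of $G_F=\prod_{i\in F}G_i$ is in general only a \emph{subdirect} product of its images $\pi_i(H)$, so \eqref{eq:Sigma1-product} says nothing about $\Sigma^1(H)$. Second, even if you take $H=\prod_{i\in F}H_i$ with each $H_i$ a finitely generated subgroup of $G_i$, formula \eqref{eq:Sigma1-product} forces $[\chi\restriction{H}]\in\Sigma^1(H)^c$ exactly when $\chi\restriction{H}$ vanishes on all factors but one, say $H_{i_0}$, and $[\chi\restriction{H_{i_0}}]\in\Sigma^1(H_{i_0})^c$; the hypothesis available to you is only $[\chi\restriction{G_{i_0}}]\in\Sigma^1(G_{i_0})$, and membership in $\Sigma^1$ does \emph{not} pass to subgroups, so $[\chi\restriction{H_{i_0}}]\in\Sigma^1(H_{i_0})$ cannot be guaranteed. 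Concretely, the case your argument leaves open is $[\chi]=\pi_{i_0}^*[\bar\chi]$ with $G_{i_0}$ infinitely generated and $[\bar\chi]\in\Sigma^1(G_{i_0})$. (When $\chi$ is non-zero on at least two factors, your scheme can be salvaged, since then $[\chi\restriction{H}]\in\Sigma^1(H)$ comes for free from \eqref{eq:Sigma1-product}.) The paper sidesteps this by invoking two imported results that carry no finite-generation hypotheses: the equality $\pi^*(\Sigma^1(Q)^c)=\Sigma^1(G)^c\cap\pi^*(S(Q))$ for a direct factor $Q$ of $G$ (from the proofs of \cite[Proposition II.4.7]{BiSt92} or \cite[Proposition C2.49]{Str13a}), and a directed-union theorem (\cite[Theorem II.5.1(i)]{BiSt92} or \cite[Proposition C2.42]{Str13a}) used in place of Proposition \ref{prp:Membership-for-infinitey-generated-groups}. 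You need one of these, or an independent argument for the single-infinitely-generated-factor case, to close the gap.
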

\begin{proof}
We begin with a general fact.
Every epimorphism $\pi \colon G \epi Q$ induces to an embedding $\pi^* \colon S(Q) \mono S(G)$.
This embedding gives rise to an inclusion of $\pi^*(\Sigma^1(Q)^c)$  into $\Sigma^1(G)^c$;
if, in addition, $Q$ is a direct factor of $G$, the inclusion becomes the equality
\[
\pi^*(\Sigma^1(Q)^c) = \Sigma^1(G)^c \cap \pi^*(S(Q))
\]
(see the proofs of Proposition II.4.7 in \cite{BiSt92} 
or of Proposition C2.49 in \cite{Str13a}).

Let's move on to the claim of the corollary.
The group $G$ is the union of the finite direct products $\Dr\nolimits_{i \in I_f} G_i$ 
where $I_f$ runs over the finite subsets of $I$.
For each $I_f$ the analogue of formula \eqref{eq:Sigma1-product-2} holds 
by a straightforward induction based on formula \eqref{eq:Sigma1-product}.
The assertion of the corollary itself then follows from part (i) in \cite[Theorem II.5.1]{BiSt92} 
or from Proposition  C2.42 in \cite{Str13a}  upon noting 
that $I$ is also the union of all finite subsets that contain a given singleton $\{i_0\}$.
\end{proof}

\begin{examples}
\label{examples:Reducible-groups}
Let $\{I_n \mid n \in \N\}$ be a sequence of pair-wise disjoint subintervals of the interval $I$
and choose, for each subinterval $I_n$, a subgroup $G_n$ of  $\PL_o(I_n)$.
These subgroups $G_n$ generate a group $G$;
it is contained in $\PL_o(I)$ and is the direct product $\Dr_{n \in \N} G_n$ of the subgroups $G_n$.
The invariant $\Sigma^1(G)$ can then be computed with the help of Corollary \ref{crl:Infinite-product}.

Two special instances are worth mentioning:
i) if each group $G_n$ satisfies the hypotheses of Proposition \ref{prp:Theorem8.1-of-BNS}
then $\Sigma^1(G)^c$ is a countably infinite set;
ii) if, on the other hand, each $G_n$ is infinite cyclic, 
then $G$ is free abelian of countable rank and $\Sigma^1(G)^c$ is empty 
(by \cite[Corollary II.4.4]{BiSt92} or \cite[Corollary C2.45]{Str13a}).
\end{examples}

\subsection{Irreducible groups $G$ with $G/(\ker \sigma_\ell \cdot \ker \sigma_r)$ not torsion: part 1} 
\label{ssec:Irreducible-groups-but-sigma_ell-sigma_r-dependent-part-1}
%
I consider first a class of irreducible groups $G$
for which the invariant can be determined almost completely,
in spite of the fact 
that the quotient group $G/(\ker \sigma_\ell \cdot \ker \sigma_r)$ not a torsion group.

Let $I = [a,c]$ be a compact interval of positive length
and let $H$ be an irreducible  subgroup of $\PL_o([a_0, c_0])$ 
with $a < a_0 < c_0 < c$.
Choose a homeomorphism $f \in \PL_o(I)$ 
whose support contains the subset $]a, a_0] \cup [c_0, c[$
and let $G$ denote the group generated by $H \cup \{f\}$.
Then $G$ is an irreducible, non-abelian subgroup of $\PL_o(I)$.
The homomorphisms $\sigma_\ell$ and $\sigma_r$ are both non-trivial;
they may, however, represent the same point, or form a pair of antipodal points.
The kernels $\ker \sigma_\ell$ and $\ker \sigma_r$ 
coincide both with the normal closure $N = \gp_G(H)$ of $H$ in $G$
and so $G/(\ker \sigma_\ell \cdot \ker \sigma_r)$ is infinite cyclic.
Proposition \ref{prp:Theorem8.1-of-BNS} thus does not apply;
its conclusion, however, continues to be hold to a large extent. 
\begin{lem}
\label{lem:Bounded-by-infinite-cyclic}
Let $G$ and $N =\gp_G(H)$ be as before.
Then 
\begin{equation}
\label{eq:Sigma1-exceptional-case}
\{ [\chi_\ell], [\chi_r ] \} \subseteq \Sigma^1(G)^c  \subseteq S(G,N).
\end{equation}
\end{lem}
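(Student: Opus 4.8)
The statement is a conjunction of two inclusions, and my plan is to treat each by reducing it to a result already established for irreducible subgroups of $\PL_o(I)$; no fresh construction of submonoids or auxiliary elements should be required.

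For the left-hand inclusion $\{[\chi_\ell],[\chi_r]\} \subseteq \Sigma^1(G)^c$ I would simply invoke Proposition \ref{prp:Roles-of-chi-ell-and-chi-r}. By construction $G$ is irreducible and non-abelian, and both $\sigma_\ell$ and $\sigma_r$ are non-trivial, so the associated characters $\chi_\ell = \ln\circ\,\sigma_\ell$ and $\chi_r = \ln\circ\,\sigma_r$ are non-zero. The hypotheses of that proposition are therefore met and it returns $[\chi_\ell],[\chi_r] \in \Sigma^1(G)^c$ at once.

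For the right-hand inclusion $\Sigma^1(G)^c \subseteq S(G,N)$ I would argue by contraposition, showing that every non-zero character $\chi$ with $\chi(N) \neq \{0\}$ represents a point of $\Sigma^1(G)$. The decisive point is the identification $\ker\chi_\ell = \ker\chi_r = N$ coming from the way $G$ is built out of $H$ and $f$: it makes the single condition $\chi(N)\neq\{0\}$ equivalent to the assertion that $[\chi]$ lies simultaneously in $S(G,\ker\chi_\ell)^c$ and in $S(G,\ker\chi_r)^c$. Addendum \ref{prp:Addendum-Theorem1.1}, whose only hypothesis is the irreducibility of $G$, then places $[\chi]$ in $\Sigma^1(G)$; passing to complements yields exactly $\Sigma^1(G)^c \subseteq S(G,N)$.

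In this way $\Sigma^1(G)^c$ is squeezed between the two-point set $\{[\chi_\ell],[\chi_r]\}$ and the great subsphere $S(G,N)$. I do not expect a genuine obstacle, since both inclusions follow immediately from the earlier results; the one step deserving attention is checking that the kernel hypotheses of the Addendum are satisfied, and this is guaranteed by the equalities $\ker\sigma_\ell = \ker\sigma_r = N$ built into the definition of $G$. It is worth noting that the torsion assumption of Theorem \ref{thm:Generalization-of-BNS-Theorem-8.1} is never used here, which is exactly what is needed, as that hypothesis fails for this $G$, the quotient $G/N$ being infinite cyclic.
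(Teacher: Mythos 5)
Your proof is correct. The first inclusion is handled exactly as in the paper: one checks that $G$ is irreducible and non-abelian and that $\sigma_\ell,\sigma_r$ are non-trivial, and then quotes Proposition \ref{prp:Roles-of-chi-ell-and-chi-r}. For the second inclusion you take a genuinely different route. You apply Addendum \ref{prp:Addendum-Theorem1.1} directly to $G$, using the equality $\ker\chi_\ell=\ker\chi_r=N$ built into the construction to identify $S(G,N)^c$ with $S(G,\ker\chi_\ell)^c\cap S(G,\ker\chi_r)^c$; since the Addendum needs only the irreducibility of $G$, this immediately gives $S(G,N)^c\subseteq\Sigma^1(G)$. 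The paper instead works with the subgroup $N$ itself: $N$ is an irreducible subgroup of $\PL_o(I)$ on which $\sigma_\ell$ and $\sigma_r$ restrict trivially, so Theorem \ref{thm:Generalization-of-BNS-Theorem-8.1} (whose torsion hypothesis is vacuous for $N$) yields $\Sigma^1(N)=S(N)$, and this is then transferred to $G$ via the standard fact that a character $\chi$ with $\chi(N)\neq\{0\}$ and $[\chi\restriction{N}]\in\Sigma^1(N)$ for a normal subgroup $N$ represents a point of $\Sigma^1(G)$ (Corollary II.4.3 in \cite{BiSt92}, or Proposition C2.52 in \cite{Str13a}). Your version is somewhat more self-contained, since the Addendum is proved within the paper and no external normal-subgroup transfer result is invoked; the paper's version has the side benefit of recording the stronger statement $\Sigma^1(N)=S(N)$. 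Your closing observation --- that the torsion hypothesis of Theorem \ref{thm:Generalization-of-BNS-Theorem-8.1} is never imposed on $G$ itself --- is exactly the reason the Addendum was isolated, so the two arguments are equally legitimate.
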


\begin{proof}
By assumption, 
the supports of the elements of $H$ cover the interval $]a_0, c_0[$ 
and the support of $f$ contains the complement of this interval in $]a, c[\,$.
It follows that $G$ is irreducible;
as $f([a_0, c_0]) \neq [a_0, c_0]$ the group $G$ is non-abelian.
By Proposition \ref{prp:Roles-of-chi-ell-and-chi-r},
the points $[\chi_\ell]$ and $[\chi_r]$ belong therefore to $\Sigma^1(G)^c$.
In addition,
the subgroup $N$ is irreducible;
since the restrictions of $\sigma_\ell$ and $\sigma_r$ to $N$ are trivial,
Theorem \ref{thm:Generalization-of-BNS-Theorem-8.1} applies 
and shows that $\Sigma^1(N) = S(N)$.
This fact implies that $S(G,N)^c \subseteq \Sigma^1(G)$;
indeed, let $\chi \colon G \to \R$ be a character that does not vanish on $N$.
Then $\chi \restriction{N} $ represents a point of $\Sigma^1(N)$ by the previous conclusion,
and thus $[\chi] \in \Sigma^1(G)$
by Corollary II.4.3 in \cite{BiSt92} or by \cite[Proposition C2.52]{Str13a},
and by the fact that $N$ is a normal subgroup of $G$.
\end{proof}

\begin{examples}
\label{examples:Bounded-by-infinite-cyclic}
a)
The notation being as before,
assume the homeomorphism $f$ has the property that $t < f(t)$ for every $t \in \,]a, c[$
or, more generally, that $D_a(f) > 1 > D_c(f)$.
Then $[\chi_r] = - [\chi_\ell]$ and so $S(G,N) = \{[\chi_\ell], [\chi_r] \}$.
The conclusion of Lemma \ref{lem:Bounded-by-infinite-cyclic} 
can therefore be restated by saying that $\Sigma^1(G)^c = S(G, N)$.

b)
Assume next that $t < f(t)$ for every $t \in \,]a, a_0]$ 
and $f(t) < t$ for each $t \in [c_0, c[$.
Then $[\chi_\ell] = [\chi_r]$; this point lies in $\Sigma^1(G)^c$  
by Lemma  \ref{lem:Bounded-by-infinite-cyclic},
but the lemma does not tell one whether $[-\chi_\ell]$ lies inside $\Sigma^1(G)$ or outside of it. 
Actually, both cases can arise as is revealed by the examples b1) and b2) below.

b1)
The notation being as in example b),
suppose $H$ is mapped into itself by conjugation by $f$.
Then $fHf^{-1}$ is actually a proper subgroup of $H$
and so $G$ is a \emph{strictly descending HNN-extension} 
with finitely generated base group $H$ and stable letter $t$.
A well-known result then guarantees that  $[-\chi_\ell] \in \Sigma^1(G)$;
see \cite[Section II.6.4]{BiSt92} or \cite[Proposition A3.4]{Str13a}.

How to prove that $fHf^{-1}$ is a subgroup of $H$?
The only type of groups where this is easy 
seems to be the class of groups $G(I;A,P)$ introduced and studied in \cite{BiSt14}.
Here $I$ denotes an interval, $P$ a subgroup of $\R^\times_{>0}$,
and $A$ is a $\Z[P]$-submodule of $\R_{\add}$;
the group $G(I,A,P)$, finally, consists of all PL-homomorphisms $f$ of $\R$,
satisfying $f(A) = A$, 
having supports in $I$, slopes in $P$ and breakpoints in $A$.

To obtain the desired  examples, 
one chooses $f \in G([a,c];A,P)$ and sets $H = G([a_0, c_0];A,P)$.
Then the inclusion $fHf^{-1} \subset H$ holds, 
and if $a_0$, $c_0$ belong to $A$ and $A$ and $P$ are suitably chosen, 
the subgroup $H$ is finitely generated.
\footnote{The list of parameters $A$, $P$ 
that are known to make $H$ finitely generated is rather small;
see \cite[p.\,vii, statements e) and f)]{BiSt14}.}

b2) We consider, finally, the special case of the set-up of example b)
where $H$ is cyclic, generated by an element $h_0$ 
with the property that $t < h_0(t)$ for $t \in \, ]a_0, c_0[$. 
Then $G$ is irreducible.
For every $k \in \N$ set $a_k = f^k(a_0)$ and $c_k = f^k(c_0)$.
These sequences satisfy the order relations
\[
a < a_0 < a_1 < \cdots < a_k < \cdots  < c_k < \cdots< c_1 < c_0 < c.
\]
Assume, next, that
\begin{equation}
\label{eq:Strongly-acting} 
c_1 < h_0(a_1)
\end{equation}
and set $h_k = f^k \circ h_0 \circ f^{-k}$ for $k \in \N $.
\emph{We claim that each subgroup $\gp(h_k, h_{k+1})$ 
is the wreath product} $\gp(h_{k+1}) \wr \gp (h_h)$.

We establish this claim first for $k = 0$.
Clearly,
\begin{align*}
\supp h_1 &= f(\supp h_0) = f(\,] a_0, c_0[ \;)= \,]a_1, c_1[\, , \quad \text{and} \\
\supp (\act{h_0}h_1) &= h_0(f(\supp h_0)) =h_0(\,]a_1, c_1[\,). 
\end{align*}
In view of assumption \eqref{eq:Strongly-acting},
these calculation show that $\supp (\act{h_0}h_1) $ lies above $\supp h_1$
and so $h_1$ and $\act{h_0}h_1$ commute. 
Since $h_0$ is  an  increasing homeomorphism of $\R$ 
it follows, in addition,  
that $h_1$ commutes with all the conjugates of $h_1$ by the powers of $h_0$,
and so $h_0$ and $h_1$ generate the wreath product of $\gp(h_1)$ and $\gp(h_0)$.

Assume now that $k > 0$. 
Then
\begin{align*}
\supp h_{k+1} 
&= 
 f^{k}\left( \supp h_1 \right)= f^k\left( \,]a_{1}, c_{1}[\,\right ), 
\quad \text{and} \\
\supp (\act{h_k}h_{k+1}) 
&= 
h_k(\supp h_{k+1}) =h_k\left (f^k( \,]a_1,c_1[\, )\right)\\
&=
\left(f^k \circ h \circ f^{-k} \circ f^k\right) \left(  \,]a_1,c_1[\, \right)
=
f^k(h\left(\,]a_{1}, c_{1}[\,\right) ).
\end{align*}
In view of assumption \eqref{eq:Strongly-acting} and the fact that $f$ is increasing
these calculations imply that $\supp (\act{h_k}h_{k+1}) $ lies above $\supp h_{k+1}$.
It follows, as before 
that  $\gp(h_k, h_{k+1})$ is the wreath product of $\gp(h_{k+1})$ and $\gp(h_k)$.

The previous calculations allow us to infer 
that the sequence 
\[
k \longmapsto H_k = \gp(h_0, h_1, \cdots, h_k)
\] 
is a strictly increasing sequence of subgroups.
Indeed, $H_k$ is isomorphic to the iterated wreath product 
\[
\left( \cdots \left(\;\left(\gp(h_k) \wr \gp(h_{k-1}\right) \wr\gp(h_{k-2}\right) \cdots \right) \wr \gp(h_0)
\]
which abelianizes to $\Z^{k+1}$.
It follows that $G$ is \emph{not} a descending HNN-extension with a \emph{finitely generated} base group contained in $N = \gp_G(H)$ and stable letter $f$
and so $-[\chi_\ell] \notin \Sigma^1(G)$.
In the notation used in formula \eqref{eq:Sigma1-exceptional-case},
this conclusion can be restated by saying
that $\Sigma^1(G)^c$ coincides with the upper, but not with the lower bound.
So one of the points of $\Sigma^1(G)^c$ is not accounted for 
by the characters $\chi_\ell$ and $\chi_r$.
\end{examples}
%
\subsection{Irreducible groups $G$ with $G/(\ker \sigma_\ell \cdot \ker \sigma_r)$ not torsion: part 2} 
\label{ssec:Irreducible-groups-but-sigma_ell-sigma_r-dependent-part-2}
I close the discussion by saying a few words about a class of irreducible groups
where almost nothing is known about $\Sigma^1(G)^c$.
In order not to overcharge the example with unnecessary details,
I content myself with a two generator group.
Let $I = [a,c]$ be a compact interval of positive length 
and consider two PL-homeomorphisms $f$ and $g$  in $\PL_o(I)$ 
with these properties:
\[
D_a(f) >1, \quad D_a(g) > 1 \text{ and  } D_c(f) \neq 1,  \quad D_c(g) \neq 1
\]
Assume, in addition,
that the positive real numbers $D_a(f)$ and $D_a(g)$ generate in $\R^\times_{>0}$ 
a free abelian subgroup of rank 2,
that $D_c(f)$ and $D_c(g)$ generate also a free abelian group of rank 2,
and that the supports of $g$ and $h$ cover the interior of $I$.
Then $G$ is irreducible,
its abelianization is free abelian of rank 2
and $G'$ coincides with the subgroup of bounded elements $B$ of $G$.
Thus $\ker \sigma_\ell$ and $\ker \sigma_r$ are both equal to $G'$
and so $G/(\ker \sigma_\ell \cdot\ker \sigma_r) = G_{\ab}$,
Moreover, $G$ is \emph{not abelian}.
\label{footnote:Non-abelian}
\footnote{If the support of either $f$ or $g$ is not the entire interval $]a, c[$
this addendum is easy to justify:
by the irreducibility of $G$ 
there exists then $h \in G$ 
that moves an open component of the support of the element in question,
and so $G$ is not abelian.
If the supports of $f$ and $g$ are both equal to $]a,c[$ or,
put differently, if $f$ and $g$ are one bump functions,
one can invoke the results of \cite{BrSq01}, in particular Theorem 4.18.
They show that the elements of $G$ which commute with $f$ form an infinite cyclic group. 
Since $G$ maps onto a free abelian group of rank 2,
$g$ does therefore not commute with $f$.}
The characters $\chi_\ell$ and $\chi_r$ are not trivial;
by Proposition \ref{prp:Roles-of-chi-ell-and-chi-r} 
they represent therefore points in  $\Sigma^1(G)^c$.
I know, however, of no method that would allow one to determine,
given any other point $[\chi]$ of the circle $S(G)$,
whether or not it lies in $\Sigma^1(G)$.
%
\section{Proof of Theorem \ref{thm:Generalization-of-BNS-Theorem-8.1-half-line}} 
\label{sec:Proof-of-main-theorems-for-half-line}
%
The proof of Theorem \ref{thm:Generalization-of-BNS-Theorem-8.1-half-line}
has a structure that is similar to that of the proof of Theorem
\ref{thm:Generalization-of-BNS-Theorem-8.1},
given in section \ref{ssec:Proof-of-first-main-result}.
To achieve this similarity
one needs analogues of Proposition \ref{prp:Roles-of-chi-ell-and-chi-r}
and Lemma \ref{lem:Existence-elements-f-and-h}
that deal with the character $\tau_r$ instead of $\sigma_r$.
Prior to stating and proving these results,
I recall the hypotheses of Theorem \ref{thm:Generalization-of-BNS-Theorem-8.1-half-line}.

The interval $I$ is the half line $[0, \infty[$ and $G$ is an irreducible subgroup of $\PL_o(I)$
satisfying the restriction 
that every element $g \in G$ is a translation near $\infty$;
in other words, 
the image of the homomorphism $\rho \colon G \to \Aff_o(\R)$ consists merely of translations.
In addition, $G/(\ker \sigma_\ell \cdot \ker \tau_r)$ is a torsion group.

The restriction on $\im \rho$ permits one to define a  homomorphism $\tau_r \colon G \to \R$
that sends $g \in G$ to the amplitude of the translation $\rho(g)$.
The negative of this homomorphism $\tau_r$ plays the rôle of $\sigma_r$ in
Theorem \ref{thm:Generalization-of-BNS-Theorem-8.1}. 
The restriction has a further consequence 
that is familiar from the groups $\PL_o([a, c])$ with supports in a compact interval:
the support of every commutator of elements of  $G$ is contained 
in a compact interval.

%
\subsection{Auxiliary results} 
\label{ssec:Auxuliary-results-I-half-line}
%
I begin with an analogue of  Proposition \ref{prp:Roles-of-chi-ell-and-chi-r}.
\begin{prp}
\label{prp:Roles-of-chi-ell-and-tau-r}
Let $G$  be a non-abelian, irreducible subgroup of $\PL_o([0, \infty[\;)$.
If $\chi_\ell$ is non-zero it represents a point of $\Sigma^1(G)^c$,
and similarly for $-\tau_r$.
\end{prp}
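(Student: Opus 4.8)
The plan is to treat the two assertions separately. The claim about $\chi_\ell$ concerns only the behaviour at the finite endpoint $0$, where the slope homomorphism $\sigma_\ell$ is defined exactly as in the compact case; the submonoid $M_\delta$ built from the requirement that $g^{-1}$ be linear on $[0,\delta]$ therefore works verbatim, and the proof of Proposition~\ref{prp:Roles-of-chi-ell-and-chi-r} carries over unchanged once one notes that the irreducibility of $G$ on $[0,\infty[$ still lets one conjugate a non-trivial commutator so that its support lies inside $[0,\delta]$. So I would write out only the new assertion, the one about $\chi:=-\tau_r$. Assuming $\tau_r\neq 0$, the goal is to produce a submonoid $M$ of $G_\chi$ that is \emph{properly} contained in $G_\chi$ and satisfies the three properties in \eqref{eq:Measured-submonoid}; Proposition~\ref{prp:Non-membership-for-arbitrary-groups} then yields $[-\tau_r]\notin\Sigma^1(G)$.

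For the construction I would replace the multiplicative slope at a finite endpoint by the additive translation amplitude at $\infty$. Fix a real $R>0$ and set
\[
M_R=\{\,g\in G_\chi \mid g^{-1}\text{ coincides with a translation on }[R,\infty[\,\}.
\]
Since every element of $G$ is a translation near $\infty$, membership just says that $g^{-1}$ has no breakpoint beyond $R$; the amplitude of that translation is forced to be $\chi(g)=-\tau_r(g)\geq 0$, so $g^{-1}(x)=x+\chi(g)$ for $x\geq R$. The sign $\chi(g)\geq 0$ is exactly what makes the monoid computations close up: for $x\geq R$ one has $g_1^{-1}(x)=x+\chi(g_1)\geq R$, whence composing with $g_2^{-1}$ gives $(g_1g_2)^{-1}(x)=x+\chi(g_1)+\chi(g_2)$, so $M_R$ is a submonoid. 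The same bookkeeping verifies $M_R\cap M_R^{-1}=M_R\cap\ker\chi$ and $(M_R\cdot M_R^{-1})\cap G_\chi=M_R$; in the second identity the hypothesis $\chi(g_1)\geq\chi(g_2)$ guarantees that $g_1^{-1}(x)$ lands in the region $[R+\chi(g_2),\infty[$ on which $g_2$ is a translation, so that $g_2g_1^{-1}$ is again a translation on $[R,\infty[$.

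It remains to see that $M_R$ generates $G$ and is a proper submonoid of $G_\chi$. For generation I would pick $g_0\in G$ with $\chi(g_0)>0$, so that $g_0^{-1}$ is a rightward translation beyond some $R_0$; for $R\geq R_0$ its powers satisfy $g_0^{-m}(x)=x+m\chi(g_0)$ on $[R,\infty[$, and choosing $m$ large pushes $g_0^{-m}([R,\infty[)$ past all the finitely many breakpoints of a given $g^{-1}$. Then $g^{-1}g_0^{-m}=(g_0^mg)^{-1}$ is a translation on $[R,\infty[$ while $\chi(g_0^mg)=m\chi(g_0)+\chi(g)\geq 0$ for large $m$, so $g_0^mg\in M_R$ and $g=g_0^{-m}(g_0^mg)\in\gp(M_R)$. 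For properness I would use that $G$ is non-abelian: a non-trivial commutator $c$ has bounded support and lies in $\ker\tau_r=\ker\chi$, hence in $G_\chi$. Because $G$ is irreducible on the half line, the supremum of every orbit in $]0,\infty[$ is $\infty$ (the half-line analogue of Lemma~\ref{lem:Irreducibility-PL}(i)), so a suitable conjugate $\act{g}c$ has $\supp(\act{g}c)\subset\,]R,\infty[$. Such an element lies in $\ker\chi\cap G_\chi$ but, being non-trivial with support beyond $R$, its inverse has a breakpoint in $]R,\infty[$; thus $\act{g}c\notin M_R$ and $M_R\subsetneq G_\chi$.

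The main obstacle, and the only genuinely new point compared with Proposition~\ref{prp:Roles-of-chi-ell-and-chi-r}, is getting the additive analogue right: one must check that the sign convention $\chi=-\tau_r$ (a leftward translation at $\infty$, of amplitude $\leq 0$) is precisely what keeps all the relevant images inside $[R,\infty[$, so that the two submonoid identities in \eqref{eq:Measured-submonoid} hold, and one must extract from irreducibility the fact that orbits are unbounded above in order to push a commutator's support beyond $R$. Everything else is a transcription of the compact-interval argument, and the proof closes by invoking Proposition~\ref{prp:Non-membership-for-arbitrary-groups}.
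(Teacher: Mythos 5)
Your proposal is correct and follows essentially the same route as the paper: the $\chi_\ell$ case is reduced to the compact-interval argument, and for $-\tau_r$ you build exactly the paper's submonoids $M_k$ of elements whose inverses are translations on $[k,\infty[$, verify the properties of \eqref{eq:Measured-submonoid} by the same additive bookkeeping, prove generation with a positive power of an element of positive $\chi$-value and properness with a conjugated commutator of bounded support, and conclude via Proposition~\ref{prp:Non-membership-for-arbitrary-groups}.
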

\begin{proof}
The claim for  $\chi_\ell = \ln \circ\, \sigma_\ell$ 
can be established as in the proof of Proposition
\ref{prp:Roles-of-chi-ell-and-chi-r}.
To verify that for $-\tau_r$, 
we modify the cited proof.
The goal is to construct a submonoid $M$ of $G$, 
properly contained in $G_{-\tau_r}$, 
that fulfills the properties \eqref{eq:Measured-submonoid}.
To do so
we introduce a collection of submonoids $M_k$ of $G_{-\tau_r}$ 
and verify that most of them satisfy the stated properties.
\smallskip

Given a natural number $k$,
define the subset
\begin{equation}
\label{eq:Definition-M-sub-k-half-line}
M_k = \{g \in G_{(-\tau_r)} \mid g^{-1} \text{ is a translation on } [k, \infty[\; \}.
\end{equation}
This subset is a \emph{submonoid of} $G$:
it clearly contains the identity on $\R$.
Assume now that $g_1$ and $g_2$ are in $M_k$.
Then $g_1^{-1}$ and $g_2^{-1}$ are translations on $[k, \infty[$. 
The composition $g_2^{-1} \circ g_1^{-1}$ is again a translation on $[k, \infty[$
(since $g_1^{-1}(k) \geq k$),
and therefore $g_1 \circ g_2$ is in $M_k$. 

We verify next 
that $M_k$ satisfies the  equality $M \cap M^{-1} = M \cap  \ker \chi$
for every $k \in \N$.
If $g \in M_k \cap M_k^{-1}$
then $\chi(g) \geq 0$ and $\chi(g^{-1}) \geq 0$, whence $g  \in \ker \chi_\ell$.
It follows that $M_k \cap M_k^{-1} $ is contained in $M_k \cap  \ker \chi$.
Conversely, 
if $g$ lies in $M_k \cap \ker \chi$ 
definition \eqref{eq:Definition-M-sub-delta} implies
that $g^{-1}$ is the identity on  $[k, \infty[$,
whence $g$ is the identity on $[k, \infty[$,
and so $g^{-1} \in M_k$.
Therefore  $M_\delta \cap  \ker \chi \subseteq M_\delta \cap M_\delta^{-1}$ holds.

Assume now that $g_1$ and $g_2$ are in $M_k$ with $g_1 \circ g_2^{-1} \in G_{(-\tau_r)}$.
Then $g_1^{-1}$ is a translation that maps $[k,\infty[$ onto $[k + \tau_r(g_1^{-1}), \infty[$
and $g_2$ is a translation on $[k + \tau_r(g_2^{-1}), \infty[$.
Since $\tau_r(g_1 \circ g_2^{-1} ) \leq 0$,
the inequality $\tau_r(g_2^{-1}) \leq \tau_r(g_1^{-1})$ holds;
so the composition $g_2 \circ g_1^{-1}$ is a translation on $[k, \infty[$ 
and $g_1 \circ g_2^{-1}\in M_k$.
These calculations prove 
that $(M_k \cap M_k^{-1}) \cap G_{(-\tau_r)}$ is contained in $M_k$.
As the reverse inclusion is clearly true, 
the two terms coincide.

We finally show that \emph{$M_k$ generates $G$ if $k$ is large enough}.
By assumption,
$\tau_r$ is non-zero.
So there exists an element $g_0 \in G$ with $\tau_r(g_0^{-1}) = (-\tau_r)(g_0)> 0$.
The PL-homeomorphism $g_0^{-1}$ is a translation on some half line, say on $[k_0, \infty[$.
Let $g$ be an arbitrary element of  $G$.
I claim that there exists a positive integer $m$ 
and an element $h \in M_{k_0}$ 
so that $g$ can be written in the form $g = h^{-1} \circ g_0^m$.
This will be possible if, and only if, $h^{-1} = g \circ g_0^{-m}$ is a translation on $[k_0, \infty[$
and if $\tau_r(h^{-1}) \geq 0$. 

In view of these facts, it suffices to show 
that the sequence  $n \mapsto g \circ g_0^{-n}$ consists eventually of translations on $[k_0, \infty[$ 
with non-negative $\tau_r$-values.
The second property follows from the positivity of $\tau_r(g_0^{-1})$,
the first is a consequence of the fact
that $g$ is a translation on a half line $[k_g, \infty[$, say,
and that $g_0^{-n}$ maps $[k_0, \infty[$ onto $[k_0 + n \cdot \tau_r(g_0^{-1}), \infty[$ for each $n$;
if $n$ is large enough, $g_0^{-n}$ maps therefore $[k_0, \infty[$ into $[k_g, \infty[$
and so $(g \circ g_0^{-n}) \restriction{[k_0,\infty[}$ is a composition of translations.

We, finally, verify
that $M_{k_0}$ \emph{is a proper submonoid of} $G_{(-\tau_r)}$.
Indeed, by assumption $G$ is non-abelian
and so there exist elements $f_1$, $f_2$ in $G$
whose commutator $ c = [f_1,f_2]$ is not the identity.
The support of this commutator $c$ is contained in a compact interval 
$[a_0, b_0]$ with $0 < a_0 < b_0$;
as $G$ is irreducible,
a suitable conjugate $\act{f} c$ of $c$ is therefore a non-trivial element 
with support contained in the half line $[k_0, \infty[$.
This conjugate lies in $\ker \tau_r$, but outside of $M_{k_0}$,
and testifies that $M_{k_0}$ is a proper submonoid of $G_{(-\tau_r)}$.

By now we have shown 
that $M_{k_0}$ is proper submonoid of $G_{(-\tau_r)}$
and that it satisfies the properties stated in \eqref{eq:Measured-submonoid}.
So  $[-\tau_r] \notin \Sigma^1(G)$ by Proposition \ref{prp:Non-membership-for-arbitrary-groups}.
\end{proof}

I continue with an analogue of case (ii) in Lemma \ref{lem:Existence-elements-f-and-h}:

\begin{lem}
\label{lem:Existence-elements-f-and-h-I-half-line}
Let $G$ be an irreducible subgroup of $\PL_o([0, \infty[\,)$.
Assume $\chi \colon G \to \R$ is a character 
that satisfies the following condition:
\begin{equation*}
[\chi] \neq [-\tau_r] \quad \text{and} \quad \chi(\ker \sigma_\ell) \neq \{0\}.
\end{equation*}
Then there exist $k \in \N$ and elements $f$ and $h$ in $G$ 
with positive $\chi$-values
that enjoy the following properties:
\begin{equation}
\label{eq:Consequence-ii-half-line}
f \text{ is a translation on } [k, \infty[ \text{ with } \tau_r(f) > 0 
\quad \text{and} \quad
\supp h \subset [k, \infty[.
\end{equation}
\end{lem}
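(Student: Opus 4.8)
The plan is to follow the proof of Lemma~\ref{lem:Existence-elements-f-and-h} — say case~(i) — verbatim in spirit, replacing the finite endpoint by the point $\infty$, the slope condition by a translation-amplitude condition, and the character $\chi_\ell$ by $-\tau_r$. Since $[\chi] \neq [-\tau_r]$, both $\chi$ and $-\tau_r$ are non-zero and neither is a positive multiple of the other; exactly as in the cited proof, the submonoids $G_\chi$ and $G_{(-\tau_r)}$ are then distinct with neither contained in the other. First I would pick $f_1 \in G_\chi \smallsetminus G_{(-\tau_r)}$ and $f_2 \in G_{(-\tau_r)} \smallsetminus G_\chi$ and set $f = f_1 \cdot f_2^{-1}$. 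A direct inspection of signs gives $\chi(f_1) \geq 0 > \chi(f_2)$ and $\tau_r(f_1) > 0 \geq \tau_r(f_2)$, whence $\chi(f) > 0$ and $\tau_r(f) > 0$. As every element of $G$ coincides near $\infty$ with a translation whose amplitude is its $\tau_r$-value, the element $f$ is a translation of positive amplitude on some half line $[k, \infty[$, which establishes the first half of~\eqref{eq:Consequence-ii-half-line}.

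Next I would produce $h$. The hypothesis $\chi(\ker \sigma_\ell) \neq \{0\}$ supplies an element $h_0 \in \ker \sigma_\ell$ with $\chi(h_0) \neq 0$; after possibly replacing $h_0$ by its inverse I may assume $\chi(h_0) > 0$. Since $\sigma_\ell(h_0) = 1$, the homeomorphism $h_0$ is the identity on an interval $[0, \varepsilon]$, so $a_0 = \inf \supp h_0$ is strictly positive. Conjugation does not alter $\chi$-values and moves supports by the conjugating element; thus for any $g \in G$ one has $\chi(\act{g} h_0) = \chi(h_0) > 0$ and $\inf \supp(\act{g} h_0) = g(a_0)$. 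It therefore suffices to find $g \in G$ with $g(a_0) \geq k$ and to put $h = \act{g} h_0$: then $\supp h \subset [k, \infty[$ and $\chi(h) > 0$, giving the second half of~\eqref{eq:Consequence-ii-half-line}.

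The crux is this final orbit-pushing step, which is where the non-compactness of $[0,\infty[$ enters and replaces the endpoint-approach argument available in the compact case (Lemma~\ref{lem:Irreducibility-PL}(i)); I expect it to be the main point to treat with care. The needed fact is that the $G$-orbit of the interior point $a_0$ is unbounded above. To see it, let $s = \sup(G \cdot a_0)$; if $s$ were finite then, each element of $G$ being an increasing homeomorphism that permutes the orbit, one would get $g(s) = \sup g(G \cdot a_0) = s$ for all $g \in G$, making $s$ a fixed point of $G$. As $s \geq a_0 > 0$ and $G$, being irreducible, has no fixed point in $]0, \infty[$, this forces $s = \infty$. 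Consequently some $g \in G$ satisfies $g(a_0) \geq k$, which completes the construction of $h$ and hence the proof.
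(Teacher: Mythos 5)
Your proposal is correct and follows essentially the same route as the paper: choose $f_1 \in G_\chi \smallsetminus G_{(-\tau_r)}$ and $f_2 \in G_{(-\tau_r)} \smallsetminus G_\chi$, set $f = f_1 f_2^{-1}$ to get $\chi(f)>0$ and $\tau_r(f)>0$, then conjugate an element $h_0 \in \ker\sigma_\ell$ with $\chi(h_0)>0$ so that its support lands in $[k,\infty[$. The only difference is that you spell out the orbit-unboundedness argument (the half-line analogue of Lemma \ref{lem:Irreducibility-PL}(i)) that the paper leaves implicit in the phrase ``as $G$ is irreducible,'' which is a welcome clarification rather than a different method.
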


\begin{proof}
By assumption,
the rays $[\chi]$ and $[-\tau_r]$ are distinct; 
hence so are the submonoids $G_\chi$ and $G_{(-\tau_r)}$
and neither of them is contained in the other.
There exist therefore an element, say $f_1$, in $G_\chi \smallsetminus G_{(-\tau_r)}$ 
and an element $f_2$ in $G_{(-\tau_r)} \smallsetminus G_\chi$,
whence
\[
\chi(f_1) \geq 0, \quad \tau_r(f_2)  \leq 0 
\quad \text{and} \quad 
\chi(f_2) < 0, \quad \tau_r(f_1) > 0.
\]
The product $f = f_1 \cdot f_2^{-1}$ has then the property 
that $\chi(f)$ and $\tau_r(f)$ are both positive.
Since $f$ is a translation on some half line,
say on $]k, \infty[$,
the second inequality implies that that $f(k) > k$

We next use the hypothesis that $\chi$ does not vanish on $\ker \sigma_\ell$.
So $G$ contains an element $h_0$ with positive $\chi$-value
and a support that does not touch 0.
As $G$ is irreducible, 
a conjugate $h = \act{g_0}h_0$ of $h_0$ has thus the properties
\begin{equation}
\label{eq:Properties-h-half-line}
\supp h \subset [k, \infty[ \quad \text{and} \quad \chi (h) = \chi (h_0 ) > 0.
\end{equation}
So $f$ and $h$ enjoy the asserted properties.
\end{proof}
%
\subsection{Proof of Theorem \ref{thm:Generalization-of-BNS-Theorem-8.1-half-line}} 
\label{ssec:Proof-main-theorem-I-half-line}
The proof relies on Lemmata
\ref{lem:Existence-elements-f-and-h}, 
\ref{lem:G-non-abelian}
and \ref{lem:Existence-elements-f-and-h-I-half-line},
and on Propositions 
\ref{prp:Membership-for-infinitey-generated-groups},
\ref{prp:Roles-of-chi-ell-and-tau-r}
and on the proof of Proposition \ref{prp:Theorem8.1-of-BNS}.
 %
\subsubsection{Inclusion $E(G) \subseteq \Sigma^1(G)^c$} 
\label{sssec:Part-Exceptional-set-half-line}
By hypothesis, 
the group $G$ is irreducible and $G/(\ker \chi_\ell \cdot \ker \chi_r)$ is a torsion group,
and so $G$ is non-abelian by Lemma \ref{lem:G-non-abelian}.
Assume now that $\sigma_\ell$ is non-trivial.
Then Proposition \ref{prp:Roles-of-chi-ell-and-tau-r}
applies and shows that $[\chi_\ell] \notin \Sigma^1(G)^c$.
If $\sigma_r$ is non-trivial,
one finds similarly that $[\chi_r] \notin \Sigma^1(G)^c$.
%
\subsubsection{Inclusion $S(G) \smallsetminus E(G) \subseteq \Sigma^1(G)$} 
\label{sssec:Part-Non-exceptional-set-half-line}
Let $\chi \colon G \to \R$ be a non-zero character 
that is neither a positive multiple of $\chi_\ell$
nor a positive multiple of $-\chi_r$.
As the image of $\chi$ is an infinite torsion-free group  
while $G/(\ker \chi_\ell \cdot \ker \chi_r)$ is a torsion group,
$\chi$ does not vanish on $\ker \chi_\ell \cup \ker \chi_r$. 
Two cases arise.

If $\chi(\ker \tau_r) \neq \{0\}$
the hypothesis of case (i) in Lemma \ref{lem:Existence-elements-f-and-h} is fulfilled,
and so this lemma provides one with elements $f$, $h$ and a positive real $\delta < c-a$
with these properties:
\begin{align}
\chi(f) > 0,& \quad f \text{ is linear on } [a,a+ \delta] \text{ with slope } < 1,
\label{eq:Property-f-half-line-1}\\
\chi(h) > 0,& \quad \supp h \subset [a, a+\delta].
\label{eq:Property-h-half-line-1}
\end{align}
Consider now a finite subset $\XX_1$ of $G_\chi$ that includes both $f$ and $h$
and let $G_1$ denote the subgroup generated by $\XX_1$.
For every commutator $c = [h_1, h_2]$ with $h_i \in \XX_1 \cup \XX_1^{-1}$  
there exists then a positive integer $m$ 
so that the supports of $\act{f^m}h$ and $c$ are disjoints,
and so it follows, 
as in the proof of Lemma \ref{lem:Sufficient-condition-for-point-in-Sigma1},
that $\chi \restriction{G_1}$ represents a point of $\Sigma^1(G_1)$.
The group $G$ itself  is the union of such finitely generated subgroups $G_1$;
as $\chi$ does not vanish on the intersection of two such subgroups
(for both contain the elements $f$ and $h$)
Proposition \ref{prp:Membership-for-infinitey-generated-groups}
allows us to conclude
that  $[\chi] \in \Sigma^1(G)$.

Secondly,
assume that $\chi(\ker \sigma_\ell) \neq \{0\}$.
Then the hypothesis of Lemma \ref{lem:Existence-elements-f-and-h-I-half-line} is fulfilled.
This lemma furnishes elements $f$, $h$ and a natural number $k$,
all in such a way that
\begin{align}
\chi(f) > 0,& \quad f \text{ is a translation on } [k, \infty[ \text{ with } \tau_r(f) > 0,
\label{eq:Property-f-half-line-2}\\
\chi(h) > 0,& \quad \supp h \subset [k, \infty[.
\label{eq:Property-h-half-line-2}
\end{align}
With the help of $f$ and $h$ one proves then, as in the preceding paragraph, 
that  $[\chi] \in \Sigma^1(G)$.

\section{Proof of Theorem \ref{thm:Generalization-of-BNS-Theorem-8.1-line}} 
\label{sec:Proof-of-main-theorem-for-line}
The structure of the proof of Theorem \ref{thm:Generalization-of-BNS-Theorem-8.1-line}
is again similar to that of the proof of Theorem
\ref{thm:Generalization-of-BNS-Theorem-8.1}.
To arrive at this similarity
one needs analogues of Proposition \ref{prp:Roles-of-chi-ell-and-chi-r}
and Lemma \ref{lem:Existence-elements-f-and-h}
that deal with the characters $\tau_\ell$ and $\tau_r$ instead of $\sigma_\ell$ and $\sigma_r$.
Prior to stating and proving these results,
I recall the hypotheses of Theorem \ref{thm:Generalization-of-BNS-Theorem-8.1-line}.

The interval $I$ is the line $\R$ and $G$ is an irreducible subgroup of $\PL_o(\R)$
satisfying the restriction 
that every element $g \in G$ is a translation,
both  near $-\infty$ and near $\infty$.
Moreover, $G/(\ker \tau_\ell \cdot \ker \tau_r)$ is a torsion group.

The condition on $\lambda$ and on $\rho$ permits one to define homomorphisms 
$\tau_\ell \colon G \to \R$ and $\tau_r \colon G \to \R$.
They send an element $g \in G$ to the amplitudes of the translations $\lambda(g)$
and $\rho(g)$, respectively.
These homomorphisms $\tau_\ell$ and $\tau_r$ play the rôles of $\sigma_\ell$ and $\sigma_r$ in
Theorem \ref{thm:Generalization-of-BNS-Theorem-8.1}. 
The condition on $\lambda$ and $\rho$ has a further consequence: 
the support of every commutator of elements of  $G$ is contained 
in a compact interval.
%
\subsection{Auxiliary results} 
\label{ssec:Auxuliary-results-I-line}
%
I begin with the result that replaces Proposition \ref{prp:Roles-of-chi-ell-and-chi-r}.
\begin{prp}
\label{prp:Roles-of-tau-ell-and-tau-r}
Let $G$  be a non-abelian, irreducible subgroup of $\PL_o(\R)$.
If $\tau_\ell$ is non-zero it represents a point of $\Sigma^1(G)^c$,
and similarly for $-\tau_r$.
\end{prp}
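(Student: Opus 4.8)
The plan is to establish the statement for $-\tau_r$ directly, by transcribing the $-\tau_r$ argument from the proof of Proposition~\ref{prp:Roles-of-chi-ell-and-tau-r} with only cosmetic changes, and then to deduce the statement for $\tau_\ell$ from the reflection symmetry of $\R$. For the $-\tau_r$ part I would, for each $k \in \N$, introduce the subset
\begin{equation*}
M_k = \{g \in G_{(-\tau_r)} \mid g^{-1} \text{ is a translation on } [k, \infty[\,\}
\end{equation*}
and verify, exactly as on the half line, that $M_k$ is a submonoid of $G$ satisfying the three properties gathered in \eqref{eq:Measured-submonoid}. These checks are insensitive to the behaviour of the elements near $-\infty$: the defining condition constrains $g^{-1}$ only on $[k,\infty[$, and the composition rule goes through because $g^{-1}(k) = k + \tau_r(g^{-1}) \geq k$ for $g \in M_k$. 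To see that $M_k$ generates $G$ once $k$ is large, I would choose $g_0 \in G$ with $\tau_r(g_0^{-1}) > 0$ and write an arbitrary $g$ as $h^{-1} \circ g_0^{m}$ with $h \in M_{k_0}$, using that $g \circ g_0^{-m}$ becomes a translation on $[k_0, \infty[$ of non-negative amplitude once $m$ is large enough.

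The single point at which the line case genuinely differs from the half line is the proof that $M_{k_0}$ is a \emph{proper} submonoid of $G_{(-\tau_r)}$. Here I would use that $G$ is non-abelian to produce a non-trivial commutator $c = [f_1,f_2]$, and then invoke the consequence of the hypothesis on $\lambda$ and $\rho$ recorded just before the auxiliary results, namely that $\supp c$ lies in a compact interval $[a_0, b_0]$. The reason this holds is exactly that $\tau_\ell$ and $\tau_r$ are homomorphisms into the abelian group $\R$, so $c$ is the identity near both ends and its support is therefore bounded on both sides. Irreducibility of $G$ then allows me to conjugate $c$ by a suitable $f \in G$ so that $\supp(\act{f}c) \subset [k_0, \infty[$; this conjugate lies in $\ker \tau_r$ but not in $M_{k_0}$, whence $M_{k_0} \varsubsetneqq G_{(-\tau_r)}$. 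Proposition~\ref{prp:Non-membership-for-arbitrary-groups} then delivers $[-\tau_r] \notin \Sigma^1(G)$.

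For $\tau_\ell$ I would avoid rerunning the bookkeeping at the left end and instead exploit the reflection $\alpha \colon t \mapsto -t$. Conjugation by $\alpha$ is an automorphism $\Phi$ of $\PL_o(\R)$ carrying $G$ onto an irreducible, non-abelian subgroup $\Phi(G)$ with the same two-sided translation property, and a short computation gives $\tau_r(\Phi(g)) = -\tau_\ell(g)$, so that $\tau_\ell = \bigl((-\tau_r)\restriction{\Phi(G)}\bigr) \circ \Phi$ on $G$. Applying the first part of the proof to $\Phi(G)$ shows $[-\tau_r] \in \Sigma^1(\Phi(G))^c$ whenever $\tau_\ell \neq 0$, and pulling this back along the homeomorphism $S(\Phi(G)) \to S(G)$ induced by $\Phi$—which carries $\Sigma^1$ onto $\Sigma^1$ because $\Phi$ is an isomorphism—places $[\tau_\ell]$ in $\Sigma^1(G)^c$. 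The main obstacle, such as it is, resides entirely in the properness step, and it is already resolved by the compact-support property of commutators; the remainder is a symmetric rerun of the half-line argument.
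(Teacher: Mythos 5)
Your proposal is correct and follows essentially the same route as the paper: the $-\tau_r$ case is the verbatim transfer of the half-line monoids $M_k$ (with the properness step secured, exactly as you say, by the compact support of commutators, which follows from $\lambda$ and $\rho$ landing in the abelian translation group), concluded via Proposition~\ref{prp:Non-membership-for-arbitrary-groups}. The only divergence is cosmetic: for $\tau_\ell$ the paper mirrors the construction with monoids $\tilde{M}_k$ supported on $]-\infty,-k]$, whereas you reduce to the $-\tau_r$ case by conjugating with the reflection $t\mapsto -t$ --- a legitimate shortcut in the spirit of the reflection argument already used in the proof of Lemma~\ref{lem:Sufficient-condition-for-point-in-Sigma1}.
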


\begin{proof}
As in the proofs of Propositions
\ref{prp:Roles-of-chi-ell-and-chi-r}  
and \ref{prp:Roles-of-chi-ell-and-tau-r}
the strategy is to construct monoids $M$, 
enjoying properties \eqref{eq:Measured-submonoid}
and being properly contained in $G_{\tau_\ell}$ and  in $G_{(-\tau_r)}$, respectively,
and to invoke then Proposition \ref{prp:Non-membership-for-arbitrary-groups}.
In the case of $\tau_r$ 
one proceeds as in the second part of the proof of Proposition \ref{prp:Roles-of-chi-ell-and-tau-r}
and defines a sequence of monoids $k \mapsto M_k$
by setting
\begin{equation*}
\label{eq:Definition-M-sub-k-line}
M_k = \{g \in G_{(-\tau_r)} \mid g^{-1} \text{ is a translation on } [k, \infty[\; \}.
\end{equation*}
The given proof applies then \emph{verbatim}
and shows that $\tau_r \notin \Sigma^1(G)$, provided (of course) that $\tau_r$ is non-zero.

If $\tau_\ell$ is not trivial
one defines a sequence $k \mapsto \tilde{M}_k$ by putting
\begin{equation}
\label{eq:Definition-tilde(M)-sub-k}
\tilde{M}_k = \{g \in G_{\tau_\ell} \mid g^{-1} \text{ is a translation on } ]-\infty, -k] \}.
\end{equation}
The proof, given for the second claim of Proposition \ref{prp:Roles-of-chi-ell-and-tau-r},
can then easily be adapted to the new situation
and  shows then that almost all sets $\tilde{M}_k$ 
are proper submonoids of $G_{\tau_\ell}$ satisfying properties \eqref{eq:Measured-submonoid}.
Proposition \ref{prp:Non-membership-for-arbitrary-groups}
thus allows one to conclude that $[\tau_\ell] \notin \Sigma^1(G)$.
\end{proof}

We need also an analogue of Lemma \ref{lem:Existence-elements-f-and-h}:
\begin{lem}
\label{lem:Existence-elements-f-and-h-I-line}
Let $G$ be an irreducible subgroup of $\PL_o(\R)$.
Assume $\chi \colon G \to \R$ is a character 
that satisfies one of the following two conditions:
\begin{equation*}
\text{(i) } [\chi] \neq [\tau_\ell] \text{ and }\chi(\ker \tau_r) \neq \{0\},
\quad \text{or (ii) }
[\chi] \neq [-\tau_r] \text{ and } \chi(\ker \tau_\ell) \neq \{0\}.
\end{equation*}
In both cases there exist then a natural  number  $k$ 
and elements $f$ and $h$ in $G$ with positive $\chi$-values.
If condition (i) holds, the elements $f$ and $h$ enjoy the following properties
\begin{equation}
\label{eq:Consequence-i-line}
f \text{ is a translation on } ] -\infty, -k] \text{ with  } \tau_\ell(f) < 0 \text{ and }
\supp h \subset \,] -\infty, -k]. 
\end{equation}
If condition (ii) is true, $f$ and $h$ satisfy these properties:
\begin{equation}
\label{eq:Consequence-ii-line}
f \text{ is a translation on } [k, \infty[\text{ with } \tau_r(f) > 0 \text{ and }
\supp h \subset [k,  \infty[.
\end{equation}
\end{lem}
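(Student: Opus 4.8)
The plan is to transcribe the proof of Lemma~\ref{lem:Existence-elements-f-and-h}, reading the half-lines $]-\infty,-k]$ and $[k,\infty[$ as the line-analogues of the short end-intervals $[a,a+\delta]$ and $[c-\delta,c]$, and replacing the slope-characters $\chi_\ell,\chi_r$ by the amplitude-characters $\tau_\ell,-\tau_r$. I would carry out condition~(i) in full; condition~(ii) then follows by the mirror-symmetric argument, or, alternatively, by conjugating $G$ with the reflection $t\mapsto -t$, which swaps $\lambda$ and $\rho$ and carries $\tau_\ell$ to $-\tau_r$.

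First I would build $f$ by the two-monoid device. Both $\chi$ and $\tau_\ell$ are non-zero ($\chi$ because it does not vanish on $\ker\tau_r$, and $\tau_\ell$ because $[\tau_\ell]$ is a point of $S(G)$), and $[\chi]\neq[\tau_\ell]$; hence the submonoids $G_\chi$ and $G_{\tau_\ell}$ are distinct with neither contained in the other. Picking $f_1\in G_\chi\smallsetminus G_{\tau_\ell}$ and $f_2\in G_{\tau_\ell}\smallsetminus G_\chi$ and setting $f=f_1\cdot f_2^{-1}$ yields $\chi(f)>0$ and $\tau_\ell(f)<0$. By the standing hypothesis the affine map $\lambda(f)$ is a translation, so $f$ agrees on some half-line $]-\infty,-k]$ with $t\mapsto t+\tau_\ell(f)$; this is the first half of \eqref{eq:Consequence-i-line}.

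Next I would build $h$ from $\chi(\ker\tau_r)\neq\{0\}$: choose $h_0\in\ker\tau_r$ with $\chi(h_0)>0$ (replacing $h_0$ by $h_0^{-1}$ if necessary). As $\tau_r(h_0)=0$, the element $h_0$ is the identity near $+\infty$, so $\supp h_0\subset\,]-\infty,N[$ for some $N$. Here the one genuinely new ingredient enters, namely the line-analogue of Lemma~\ref{lem:Irreducibility-PL}(i): were it finite, the infimum of the $G$-orbit of $N$ would be a fixed point of $G$ (each $g\in G$ being an increasing homeomorphism of $\R$), contradicting irreducibility; hence the orbit is unbounded below. Choosing $g_0\in G$ with $g_0(N)\le -k$ and putting $h=\act{g_0}h_0$ gives $\supp h=g_0(\supp h_0)\subset\,]-\infty,-k]$, while $\chi(h)=\chi(h_0)>0$ by conjugation-invariance of $\chi$. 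Thus $f$, $h$ and $k$ satisfy \eqref{eq:Consequence-i-line}.

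I expect the orbit-unboundedness statement to be the main (if minor) obstacle, the rest being routine transcription; a small point of care is that a single $k$ must serve both $f$ and $h$, which is arranged by fixing $f$ first, choosing $k$ so that $f$ is a translation on $]-\infty,-k]$, and only then conjugating $h_0$ far enough to the left. Condition~(ii) runs identically with the roles of the two ends exchanged: from $f_1\in G_\chi\smallsetminus G_{(-\tau_r)}$ and $f_2\in G_{(-\tau_r)}\smallsetminus G_\chi$ one obtains $f$ with $\chi(f)>0$ and $\tau_r(f)>0$, and from $\chi(\ker\tau_\ell)\neq\{0\}$ one obtains $h_0$ that is the identity near $-\infty$, whose support is then pushed into $[k,\infty[$ using that the $G$-orbit of $\inf(\supp h_0)$ is unbounded above, yielding \eqref{eq:Consequence-ii-line}.
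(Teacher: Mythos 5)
Your proposal is correct and follows essentially the same route as the paper: the two-monoid device $f=f_1\cdot f_2^{-1}$ with $f_1\in G_\chi\smallsetminus G_{\tau_\ell}$, $f_2\in G_{\tau_\ell}\smallsetminus G_\chi$ to get $\chi(f)>0>\tau_\ell(f)$, followed by conjugating an element $h_0\in\ker\tau_r$ with $\chi(h_0)>0$ far to the left using irreducibility, with case (ii) handled by the mirror argument. The only difference is that you spell out the orbit-unboundedness consequence of irreducibility, which the paper leaves implicit; that is a harmless (indeed welcome) elaboration.
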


\begin{proof}
Suppose first that condition (i) holds.
By assumption,
the rays $[\chi]$ and $[\tau_\ell]$ are distinct; 
hence so are the submonoids $G_\chi$ and $G_{\tau_\ell}$
and neither of them is contained in the other.
There exist therefore an element, say $f_1$, in $G_\chi \smallsetminus G_{\tau_\ell}$ 
and an element $f_2$ in $G_{\tau_\ell} \smallsetminus G_\chi$,
whence
\[
\chi(f_1) \geq 0, \quad \tau_\ell(f_2)  \geq 0 
\quad \text{and} \quad 
\chi(f_2) < 0, \quad \tau_\ell(f_1) < 0.
\]
The product $f = f_1 \cdot f_2^{-1}$ has then the property 
that $\chi(f) > 0 > \tau_\ell(f)$.
Since $f$ is a translation on some half line,
say on $]-\infty, -k[$,
the second inequality implies that that $f(k) < k$

We next use the hypothesis that $\chi$ does not vanish on $\ker \tau_r$.
So $G$ contains an element $h_0$ with positive $\chi$-value
and a support that that is bounded from above.
As $G$ is irreducible, 
a conjugate $h = \act{g_0}h_0$ of $h_0$ has then the properties
\begin{equation}
\label{eq:Properties-h-line}
\supp h \subset \;]-\infty, -k] \quad \text{and} \quad \chi (h) = \chi (h_0 ) > 0
\end{equation}
and so $f$ and $h$ enjoy the asserted properties.

If condition (ii) is valid,
the given proof of Lemma \ref{lem:Existence-elements-f-and-h-I-half-line} applies,
\emph{mutatis mutandis} and implies that $f$, $h$ have the properties stated in 
\eqref{eq:Properties-h-line}.
\end{proof}
%
\subsection{Proof of Theorem \ref{thm:Generalization-of-BNS-Theorem-8.1-line}} 
\label{ssec:Proof-main-theorem-I-line}
The proof relies on Lemmata \ref{lem:G-non-abelian} and  \ref{lem:Existence-elements-f-and-h-I-line},
and on Propositions 
\ref{prp:Membership-for-infinitey-generated-groups},
\ref{prp:Roles-of-tau-ell-and-tau-r},
and on the proof of Proposition \ref{prp:Theorem8.1-of-BNS}.
 %
\subsubsection{Inclusion $E(G) \subseteq \Sigma^1(G)^c$} 
\label{sssec:Part-Exceptional-set-line}
By hypothesis, 
the group $G$ is irreducible and $G/(\ker \chi_\ell \cdot \ker \chi_r)$ is a torsion group.
By Lemma \ref{lem:G-non-abelian} it is therefore non-abelian.
If $\tau_\ell$ is non-trivial
Proposition \ref{prp:Roles-of-tau-ell-and-tau-r}
applies and shows that $[\chi_\ell] \notin \Sigma^1(G)^c$.
If $\tau_r$ is non-trivial,
one finds similarly that $[-\tau_r] \notin \Sigma^1(G)^c$.
%
\subsubsection{Inclusion $S(G) \smallsetminus E(G) \subseteq \Sigma^1(G)$} 
\label{sssec:Part-Non-exceptional-set-line}
Let $\chi \colon G \to \R$ be a non-zero character 
that is neither a positive multiple of $\tau_\ell$
nor a positive multiple of $-\tau_r$.
As the image of $\chi$ is an infinite torsion-free group  
while $G/(\ker \chi_\ell \cdot \ker \chi_r)$ is a torsion group,
$\chi$ does nor vanish on $\ker \chi_\ell \cup \ker \chi_r$. 
Two cases arise.

If $\chi(\ker \tau_r) \neq \{0\}$
the hypothesis of case (i) in Lemma \ref{lem:Existence-elements-f-and-h-I-line} is fulfilled,
and so this lemma provides one with elements $f$, $h$ and a natural number $k$
such that
\begin{align}
\chi(f) > 0,& \quad f \text{ is a translation on } ]-\infty, -k] \text{ with } \tau_r(f) < 0,
\label{eq:Property-f-I-line}\\
\chi(h) > 0,& \quad \supp h \subset \; ]-\infty, -k] .
\label{eq:Property-h-I-line}
\end{align}
Consider now a finite subset $\XX_1$ of $G_\chi$ that includes both $f$ and $h$
and let $G_1$ denote the subgroup generated by $\XX_1$.
For every commutator $c = [h_1, h_2]$ with $h_i \in \XX_1 \, \cup \,\XX_1^{-1}$ 
there exists then a positive integer $m$ 
so that the supports of $\act{f^m}h$ and $c$ are disjoints,
and so it follows, 
as in the proof of Lemma \ref{lem:Sufficient-condition-for-point-in-Sigma1},
that $\chi \restriction{G_1}$ represents a point of $\Sigma^1(G_1)$.
The group $G$ itself is the union of such finitely generated subgroups $G_1$;
as $\chi$ does not vanish on the intersection of two such subgroups
(for both contain the elements $f$ and $h$)
Proposition \ref{prp:Membership-for-infinitey-generated-groups}
allows us to conclude
that  $[\chi] \in \Sigma^1(G)$.

If $\chi(\ker \tau_\ell) \neq \{0\}$
the hypothesis of case (ii) in Lemma \ref{lem:Existence-elements-f-and-h-I-line} is fulfilled.
This lemma furnishes elements $f$, $h$ and a natural number $k$,
all in such a way that
\begin{align}
\chi(f) > 0,& \quad f \text{ is a translation on } [k, \infty[ \text{ with } \tau_r(f) > 0,
\label{eq:Property-f-ii}\\
\chi(h) > 0,& \quad \supp h \subset [k, \infty[.
\label{eq:Property-h-ii}
\end{align}
With the help of $f$ and $h$ one proves then, as in the preceding paragraph, 
that  $[\chi] \in \Sigma^1(G)$.

\bibliography{References}

\def\cprime{$'$} \def\cprime{$'$} \def\cprime{$'$} \def\cprime{$'$}
  \def\cprime{$'$} \def\cprime{$'$} \def\cprime{$'$} \def\cprime{$'$}
  \def\cprime{$'$} \def\cprime{$'$} \def\cprime{$'$} \def\cprime{$'$}
  \def\cprime{$'$} \def\cprime{$'$} \def\cprime{$'$} \def\cprime{$'$}
  \def\cprime{$'$} \def\cprime{$'$} \def\cprime{$'$} \def\cprime{$'$}
  \def\cprime{$'$} \def\cprime{$'$} \def\cprime{$'$} \def\cprime{$'$}
  \def\cprime{$'$} \def\cprime{$'$}
\providecommand{\bysame}{\leavevmode\hbox to3em{\hrulefill}\thinspace}
\providecommand{\MR}{\relax\ifhmode\unskip\space\fi MR }
\providecommand{\MRhref}[2]{%
  \href{http://www.ams.org/mathscinet-getitem?mr=#1}{#2}
}
\providecommand{\href}[2]{#2}
\begin{thebibliography}{Bro87b}

\bibitem[BG84]{BiGr84}
Robert Bieri and J.~R.~J. Groves, \emph{The geometry of the set of characters
  induced by valuations}, J. Reine Angew. Math. \textbf{347} (1984), 168--195.
  \MR{733052 (86c:14001)}

\bibitem[BG98]{BrGu98}
Matthew~G. Brin and Fernando Guzm{\'a}n, \emph{Automorphisms of generalized
  {T}hompson groups}, J. Algebra \textbf{203} (1998), no.~1, 285--348.
  \MR{1620674 (99d:20056)}

\bibitem[BNS87]{BNS}
Robert Bieri, Walter~D. Neumann, and Ralph Strebel, \emph{A geometric invariant
  of discrete groups}, Invent. Math. \textbf{90} (1987), no.~3, 451--477.
  \MR{914846 (89b:20108)}

\bibitem[Bro87a]{Bro87a}
Kenneth~S. Brown, \emph{Finiteness properties of groups}, Proceedings of the
  {N}orthwestern conference on cohomology of groups ({E}vanston, {I}ll., 1985),
  vol.~44, 1987, pp.~45--75. \MR{885095 (88m:20110)}

\bibitem[Bro87b]{Bro87b}
\bysame, \emph{Trees, valuations, and the {B}ieri-{N}eumann-{S}trebel
  invariant}, Invent. Math. \textbf{90} (1987), no.~3, 479--504. \MR{914847
  (89e:20060)}

\bibitem[BS80]{BiSt80}
Robert Bieri and Ralph Strebel, \emph{Valuations and finitely presented
  metabelian groups}, Proc. London Math. Soc. (3) \textbf{41} (1980), no.~3,
  439--464. \MR{591649 (81j:20080)}

\bibitem[BS85]{BrSq85}
Matthew~G. Brin and Craig~C. Squier, \emph{Groups of piecewise linear
  homeomorphisms of the real line}, Invent. Math. \textbf{79} (1985), no.~3,
  485--498. \MR{782231 (86h:57033)}

\bibitem[BS92]{BiSt92}
Robert Bieri and Ralph Strebel, \emph{Geometric invariants for discrete
  groups}, preprint, 1992.

\bibitem[BS01]{BrSq01}
Matthew~G. Brin and Craig~C. Squier, \emph{Presentations, conjugacy, roots, and
  centralizers in groups of piecewise linear homeomorphisms of the real line},
  Comm. Algebra \textbf{29} (2001), no.~10, 4557--4596. \MR{1855112
  (2002h:57047)}

\bibitem[BS14]{BiSt14}
Robert Bieri and Ralph Strebel, \emph{On groups of {P}{L}-homeomorphisms of the
  real line}, preprint, \texttt{arXiv:1411.2868v1}, 2014.

\bibitem[Ste92]{Ste92}
Melanie Stein, \emph{Groups of piecewise linear homeomorphisms}, Trans. Amer.
  Math. Soc. \textbf{332} (1992), no.~2, 477--514. \MR{1094555 (92k:20075)}

\bibitem[Str84]{Str84}
Ralph Strebel, \emph{Finitely presented soluble groups}, Group theory, Academic
  Press, London, 1984, pp.~257--314. \MR{780572 (86g:20050)}

\bibitem[Str13]{Str13a}
\bysame, \emph{Notes on the {S}igma-invariants, {V}ersion 2}, preprint,
  \texttt{arXiv:} \texttt{1204.0214v2, 1 Mar}, 2013.

\end{thebibliography}
\bibliographystyle{amsalpha}
%

%
%
\end{document}